\theoremstyle{plain}
\newtheorem{theorem}{Theorem}[section]
\newtheorem{proposition}[theorem]{Proposition}
\newtheorem{corollary}[theorem]{Corollary}
\newtheorem{lemma}[theorem]{Lemma}
\newtheorem*{mainthm}{Main Theorem}
\newtheorem*{joinlemma}{Join Lemma}
\newtheorem*{RAAG conjecture}{Action Dimension Conjecture for RAAGs}
\newtheorem*{Aconj}{Action Dimension Conjecture}
\newtheorem*{Sconj}{Singer Conjecture}
\newtheorem*{cor}{Corollary} 
\theoremstyle{definition}
\theoremstyle{remark}
\newtheorem{remark}[theorem]{Remark}
\newtheorem*{rmk}{Remark}
\newtheorem{examples}[theorem]{Examples}
\newcommand{\wt}{\widetilde}
\newcommand{\ga}{\alpha}
\newcommand{\geps}{\varepsilon}
\newcommand{\gs}{\sigma}
\newcommand{\gt}{\tau}
\newcommand{\gG}{\Gamma}
\newcommand{\gD}{\Delta}
\newcommand{\gO}{\Omega}
\newcommand{\gS}{\Sigma}
\newcommand{\pinfty}{\partial_\infty}
\newcommand{\btwo}{b^{(2)}}
\newcommand{\cat}{\operatorname{CAT}}
\newcommand{\Hom}{\operatorname{Hom}}
\newcommand{\lk}{\operatorname{Lk}}
\newcommand{\st}{\operatorname{St}}
\newcommand{\vk}{\operatorname{vk}}
\newcommand{\vkt}{\vk_{\zz/2}}
\newcommand{\edge}{\operatorname{Edge}}
\newcommand{\obdim}{\operatorname{obdim}}
\newcommand{\vkdim}{\operatorname{vkdim}}
\newcommand{\actdim}{\operatorname{actdim}}
\newcommand{\embdim}{\operatorname{embdim}}
\newcommand{\ltdim}{\operatorname{\ell^2dim}}
\newcommand{\geod}{\operatorname{gd}}
\newcommand{\redh}{\overline{H}}
\newcommand{\redb}{\overline{b}}
\newcommand{\minus}{^{-1}}
\newcommand{\lb}{b^{(2)}}
\newcommand{\OL}{{OL}}
\newcommand{\XLT}{\wt{X}_L}
\newcommand{\cac}{\mathcal{C}}
\newcommand{\QQ}{\mathbb{Q}}
\newcommand{\rr}{\mathbb{R}}
\newcommand{\zz}{\mathbb{Z}}
\newenvironment{enumerate1}{
\begin{enumerate}[\upshape (1)]}
	{
\end{enumerate}
}
\newenvironment{enumeratea}{
\begin{enumerate}[\upshape(a)]}
	{
\end{enumerate}
}
\begin{document}

\title{The action dimension of right-angled Artin groups}

\author{Grigori Avramidi\thanks{The first author was partially supported by an NSF grant.}
\and Michael W. Davis\thanks{The second author was partially supported by an NSF grant.}
\and Boris Okun\thanks{The third author was partially supported by a Simons Foundation Collaboration Grant.
He also wants to thank the Max Planck Institute for Mathematics in Bonn for hospitality.}
\and Kevin Schreve }

\date{\today} \maketitle
\begin{abstract}
	\noindent The \emph{action dimension} of a discrete group $\Gamma$ is the smallest dimension of a contractible manifold which admits a proper action of $\gG$.
	Associated to any flag complex $L$ there is a right-angled Artin group, $A_L$.
	We compute the action dimension of $A_L$ for many $L$.
	Our calculations come close to confirming the conjecture that if an $\ell^2$-Betti number of $A_L$ in degree $l$ is nonzero, then the action dimension of $A_L$ is $\geq 2l$.
	\smallskip
	
	\noindent \textbf{AMS classification numbers}.
	Primary: 57Q15, 57Q25, 20F65,\\
	Secondary: 57R58 \smallskip
	
	\noindent \textbf{Keywords}: action dimension, aspherical manifold, right-angled Artin group, van Kampen obstruction.
\end{abstract}

\section*{Introduction}
If a group $\gG$ has a finite dimensional classifying space $B\gG$, then its \emph{geometric dimension}, denoted $\geod\gG$, is the minimum dimension of a model for $B\gG$.
Its \emph{action dimension}, denoted $\actdim \gG$, is the minimum dimension of a contractible manifold $M$ which admits a proper $\gG$-action.
If $\gG$ is torsion-free, then any proper $\gG$-action is free; so, $M/\gG$ is a finite dimensional model for $B\gG$.

Any $k$-dimensional simplicial complex embeds in general position in $\rr^{2k+1}$.
So, if $\dim B\gG = k$, then $\actdim \gG\le 2k+1$.
(A regular neighborhood of $B\gG$ in $\rr^{2k+1}$ is an aspherical manifold; its universal cover is a contractible manifold on which $\gG$ acts properly.) This estimate can be improved by 1 since any $k$-dimensional CW complex is homotopy equivalent to a $2k$-manifold with boundary.
(Replace the cells by $2k$-dimensional handles --- by general position the handle attaching maps can be chosen to be embeddings.) Hence, if $\geod\gG=k$, then some model for $B\gG$ can be thickened to an aspherical manifold of dimension $2k$.
Thus,
\[
	\actdim \gG \le 2\geod\gG .
\]
(Alternatively, by a theorem of Stallings \cite{stallings}, any $k$-complex is homotopy equivalent to another $k$-complex which embeds in $\rr^{2k}$, cf.~\cite{dr93}).

In \cite{bkk} Bestvina, Kapovich and Kleiner introduced a method for determining $\actdim\gG$.
This method relates the action dimension to the minimum dimension $m$ in which certain finite simplicial complexes can embed piecewise linearly in $S^m$.
For a given finite simplicial complex $K$, this $m$ is called the \emph{embedding dimension} of $K$ and is denoted $\embdim K$.
The technique of \cite{bkk} is based on the mod 2 van Kampen obstruction to embedding $K$ into $S^m$.
We review this obstruction.
Let $\cac (K)$ denote the configuration space of unordered pairs of distinct points in $K$, i.e., if $\gD$ denotes the diagonal in $K\times K$, then $\cac(K)$ is the quotient of $(K\times K) -\gD$ by the involution which switches the factors.
The double cover $(K\times K) -\gD\to \cac (K)$ is classified by a map $c:\cac(K)\to \rr P^\infty$.
The \emph{van Kampen obstruction} in degree $m$ is the cohomology class $\vkt^m(K)\in H^m(\cac(K);\zz/2)$ defined by
\[
	\vkt^m(K) = c^*(w_1^m),
\]
where $w_1\in H^1(\rr P^\infty;\zz/2)$ is the first Stiefel--Whitney class of the canonical line bundle over $\rr P^\infty$.
The class $\vkt^m(K)$ is an obstruction to embedding $K$ in $S^m$.
We say $K$ is an \emph{$m$-obstructor} if $\vkt^m(K)\neq 0$.
The \emph{van Kampen dimension} of $K$, denoted by $\vkdim K$, is the maximum $m$ such that $\vkt^m (K)\neq 0$.
Thus, $\vkdim K +1\leq \embdim K$.
In a similar fashion, one defines an integral version of the van Kampen obstruction, denoted $\vk^m(K)$.
(We will recall its definition in Section~\ref{s:vk}.) When $m=2\dim K$ and $\dim K \neq 2$, the cohomology class $\vk^m(K)$ is the complete obstruction to embeddability; moreover, its nontriviality is often detected by the mod 2 version.
So, in many cases, $\vkdim K +1 = \embdim K$.
(It is shown in \cite{fkt94} that the integral van Kampen obstruction is incomplete for $\dim K=2$, i.e., there is a $2$-complex $K$ with $\vk^4(K)=0$, yet $\embdim K=5$.)

To fix ideas, suppose that $\gG$ is of type $F$ and that $E\gG$, the universal cover of $B\gG$, has a $Z$-set compactification.
Denote the boundary of this compactification by $\pinfty \gG$.
Suppose further that $\gG$ acts properly on a contractible $n$-manifold $M$ which has a $Z$-set compactification with boundary $\pinfty M$ and that the equivariant map $E\gG\to M$ extends to an inclusion of $Z$-set boundaries.
(For example, this is the case, if $M$ is a proper $\cat(0)$-space and $E\gG$ is a convex subspace.) To further simplify the discussion, suppose $\pinfty M$ is homeomorphic to $S^{n-1}$.
If $K$ is a finite complex embedded in $\pinfty \gG$, then $K\subset \pinfty \gG\subset \pinfty M=S^{n-1}$.
So, one expects \( \actdim \gG \geq \embdim K +1. \) Since $\embdim K\geq \vkdim K + 1$, this would entail
\[
	\actdim \gG \geq \vkdim K +2.
\]
Roughly, the definition of \cite{bkk} of the \emph{obstructor dimension of $\gG$}, denoted $\obdim \gG$, is the maximum of $\vkdim K +2$, where $K$ ranges over the finite subcomplexes of $\pinfty \gG$.
The actual definition of $\obdim\gG$ in \cite{bkk}*{p.~225} does not depend on the choice of model for $E\gG$ and does not require a $Z$-set compactification.
Moreover, $\obdim \gG$ provides a lower bound for $\actdim \gG$, cf., \cite{bkk}*{Section 3}.

A particularly tractable case to which the theory of \cite{bkk} can be applied is when $\gG=A_L$, the right-angled Artin group (abbreviated RAAG) associated to a finite flag complex $L$.
The standard classifying space $BA_L$ for $A_L$ is a subcomplex of a torus which has one $S^1$ factor for each vertex of $L$ (cf.~Section~\ref{s:prelim}).The space $BA_L$ is a locally $\cat(0)$ cube complex of dimension equal to $\dim L +1$.
Since this is the cohomological dimension of $A_L$
we have $\geod A_L = \dim L +1$.

The link of a vertex in $BA_L$ is a certain finite simplicial complex $\OL$ called the \emph{octahedralization} of $L$.
The complex $\OL$ is constructed by ``doubling the vertices of $L$.'' When $L$ is a $k$-simplex, $\OL$ is the boundary complex of a $(k+1)$-octahedron (cf.
Section~\ref{s:prelim}).
We shall see in Section~\ref{s:prelim} that $\OL \subset \pinfty A_L$.

We will show in Proposition~\ref{p:emb-act} that if $\OL$ piecewise linearly embeds in $S^m$ (and if the codimension is not $2$), then $A_L$ acts on a contractible $(m+1)$-manifold, i.e.,
\begin{equation*}
	\embdim \OL +1\geq \actdim A_L\geq \vkdim \OL +2.
\end{equation*}

Our main result concerns $\vkdim \OL$ for a flag complex $L$.
\begin{mainthm}
	Suppose $L$ is a $k$-dimensional flag complex.
	\begin{enumerate1}
		\item If $H_k(L;\zz/2)\neq 0$, then $\vkdim \OL= 2k$.
		Consequently,
		\[
			\actdim A_L =2k+2= 2\geod A_L.
		\]
		
		\item If $H_k(L;\zz/2)= 0$, then $\vk^{2k}(\OL)=0$.
		So, for $k\neq 2$, $\embdim \OL \leq 2k$.
		Consequently, \( \actdim A_L\leq 2k+1. \)
	\end{enumerate1}
\end{mainthm}
\begin{cor}
	Suppose $\dim L = k$ with $k\neq 2$.
	Then $A_L$ is the fundamental group of an aspherical $(2k+1)$-manifold if and only if $H_k(L;\zz/2)=0$.
\end{cor}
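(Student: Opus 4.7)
The plan is to read the corollary as a direct consequence of the Main Theorem, using the definition of action dimension together with the fact that a RAAG is torsion-free (it has the finite-dimensional Salvetti complex as a classifying space).

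First I would handle the ``only if'' direction. Suppose $A_L = \pi_1(N)$ for an aspherical $(2k+1)$-manifold $N$. Then the universal cover $\wt N$ is a contractible $(2k+1)$-manifold on which $A_L$ acts freely, hence properly, by deck transformations. This shows $\actdim A_L \leq 2k+1$. If $H_k(L;\zz/2)$ were nonzero, part (1) of the Main Theorem would force $\actdim A_L = 2k+2$, a contradiction. Hence $H_k(L;\zz/2) = 0$.

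For the ``if'' direction, assume $H_k(L;\zz/2) = 0$ and $k \neq 2$. Part (2) of the Main Theorem then gives a proper action of $A_L$ on some contractible manifold $M$ of dimension $d \leq 2k+1$. Since $A_L$ is torsion-free, this action is automatically free. To land at exactly dimension $2k+1$, I would replace $M$ by $M \times \rr^{2k+1-d}$, letting $A_L$ act trivially on the Euclidean factor; the result is a contractible $(2k+1)$-manifold on which $A_L$ acts freely and properly, and the quotient is an aspherical $(2k+1)$-manifold with fundamental group $A_L$.

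I do not expect any real obstacle: essentially all of the work has been packaged into the Main Theorem, and the corollary is just the translation of the inequalities $\actdim A_L \leq 2k+1$ and $\actdim A_L \geq 2k+2$ into the existence, or nonexistence, of an aspherical $(2k+1)$-manifold model for $BA_L$. The only minor points to be careful about are the stabilization trick that promotes a manifold of dimension $d \leq 2k+1$ to one of dimension exactly $2k+1$, and the hypothesis $k\neq 2$, which is inherited from part (2) of the Main Theorem and ultimately reflects the incompleteness of the integral van Kampen obstruction in the critical case $\dim K = 2$.
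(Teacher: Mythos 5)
Your proposal is correct and is essentially the argument the paper intends: the corollary is read off from the Main Theorem together with the standard facts from the introduction (a torsion-free group acting properly acts freely, so the quotient of the contractible manifold is an aspherical model for $BA_L$), with the deck-group action on the universal cover giving $\actdim A_L \leq 2k+1$ in the ``only if'' direction. Your stabilization by a trivial Euclidean factor to reach dimension exactly $2k+1$ is a fine (and standard) way to handle the one small point the paper leaves implicit.
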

\begin{rmk}
	For $k=1$, the corollary was proved previously by Droms \cite{d87a}.
	In \cite{gordon} Gordon extended this to all Artin groups as follows: Suppose $L$ is the nerve of a Coxeter group where the edges of $L$ are labeled by integers $\geq 2$ and that $A_L$ is the corresponding Artin group.
	Then $A_L$ is a $3$-manifold group if and only if each component of $L$ is either a tree or a $2$-simplex with edges labeled $2$.
	(In the case where all edge labels of $L$ are required to be even, this had been proved earlier by Hermiller and Meier \cite{hm99}.)
\end{rmk}

The $(k+1)$-fold direct product of nonabelian free groups is a RAAG to which part (1) of the Main Theorem can be applied.
The corresponding flag complex $L$ is a $(k+1)$-fold join of finite sets, each of which has at least two elements; hence, $\dim L=k$ and $H_k(L;\zz/2)\neq 0$.
In this case, van Kampen showed that $\vkt^{2k}( \OL) \neq 0$ and our Main Theorem already was stated and proved in \cite{bkk}.

The $\ell^2$-Betti numbers $\lb_i(\gG$) are well-defined invariants of a group $\gG$.
The \emph{$\ell^2$-dimension of $\gG$}, denoted $\ltdim \gG$, is defined by
\[
	\ltdim \gG:=\sup\{i\mid \lb_i(\gG)\neq 0\}.
\]
In \cite{do01} the second and third authors conjectured that $\ell^2$-Betti numbers of a group $\Gamma$ should give lower bounds for its action dimension.
More precisely, we have the following.
\begin{Aconj}[Davis--Okun] $\actdim \gG \geq 2\ltdim \gG.$
\end{Aconj}
The $\ell^2$-Betti numbers of $A_L$ were computed by Davis--Leary in \cite{dl03} as follows:
\[
	\btwo_{i+1} (A_L)=\redb_i(L),
\]
where $\redb_i(L)$ denotes the ordinary reduced Betti number, $\dim_{\QQ} \redh_i(L;\QQ)$.
This gives the following corollary to the Main Theorem.
\begin{cor}[cf.~Theorem~\ref{t:actdimconj}] For a $k$-dimensional flag complex $L$, if $H_k(L;\zz/2)\neq 0$, then the Action Dimension Conjecture holds for $A_L$.
\end{cor}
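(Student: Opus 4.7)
The plan is that this corollary should follow essentially immediately by combining three ingredients already stated in the excerpt: the Main Theorem part (1), the Davis--Leary computation of $\ell^2$-Betti numbers of $A_L$, and the known geometric dimension of $A_L$. There is no real obstacle here; the work has all been done in setting up the Main Theorem, and this is a short bookkeeping argument.

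First I would invoke the Main Theorem (1) under the hypothesis $H_k(L;\zz/2)\neq 0$ to get the exact equality
\[
  \actdim A_L \;=\; 2k+2.
\]
Then I would bound $\ltdim A_L$ from above. By the Davis--Leary formula $\btwo_{i+1}(A_L)=\redb_i(L)$, the $\ell^2$-dimension of $A_L$ satisfies
\[
  \ltdim A_L \;=\; 1+\sup\{i\mid \redb_i(L)\neq 0\}.
\]
Since $L$ is a $k$-dimensional simplicial complex, $\redh_i(L;\QQ)=0$ for $i>k$, so $\redb_i(L)=0$ for $i>k$ and hence $\ltdim A_L\leq k+1$. (Equivalently, this is the bound $\ltdim A_L\le \cd A_L=\geod A_L=k+1$ noted in the excerpt.)

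Combining the two, $\actdim A_L=2k+2\geq 2(k+1)\geq 2\ltdim A_L$, which is exactly the Action Dimension Conjecture for $A_L$. The only thing worth emphasizing is that the hypothesis $H_k(L;\zz/2)\neq 0$ is used solely to apply part (1) of the Main Theorem; the upper bound on $\ltdim A_L$ uses only that $\dim L=k$. So the corollary is genuinely a formal consequence of the Main Theorem together with the Davis--Leary calculation, and no additional geometric or topological input is needed beyond what has already been established.
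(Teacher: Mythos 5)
Your argument is correct and matches the paper's proof: the paper's Theorem~\ref{t:actdimconj} likewise combines $\actdim A_L = 2k+2$ (from Theorem~\ref{t:neq0}, i.e.\ Main Theorem (1)) with the bound $\ltdim A_L \le k+1$, which it phrases via $\geod A_L = k+1$ and you phrase equivalently via the Davis--Leary formula and $\dim L = k$. No further comment is needed.
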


Theorem~\ref{t:actdimconj} provides strong evidence that if $\redb_l(L) \ne 0$, then $\vkt^{2l}(\OL)\ne 0$ which would imply that $\actdim A_L\ge 2\ltdim A_L$.
In other words, Theorem~\ref{t:actdimconj} comes close to providing a proof of the Action Dimension Conjecture for general RAAGs.

\numberwithin{equation}{section}

\section{Octahedralization and RAAGs}\label{s:prelim}
\paragraph{The octahedralization of a simplicial complex} 
Given a finite set $V$, let $\gD(V)$ denote the full simplex on $V$ and let $O(V)$ denote the boundary complex of the octahedron on $V$.
In other words, $O(V)$ is the simplicial complex with vertex set $V\times \{\pm 1\}$ such that a subset $\{(v_0, \geps_0),\dots, (v_k, \geps_k)\}$ of $V\times \{\pm1\}$ spans a $k$-simplex if and only if its first coordinates $v_0,\dots v_k$ are distinct.
Projection onto the first factor $V\times \{\pm1\}\to V$ induces a simplicial projection $p:O(V)\to \gD(V)$.
We denote the vertex $(v,+1)$ or $(v,-1)$ by $v^+$ or $v^-$, respectively.

Any finite simplicial complex $L$ with vertex set $V$ is a subcomplex of $\gD(V)$.
The \emph{octahedralization} $\OL$ of $L$ is the inverse image of $L$ in $O(V)$:
\[
	\OL:= p\minus (L) \subset O(V).
\]
(This terminology comes from \cite{do12}*{Section 8}.) We also say that $\OL$ is the result of ``doubling the vertices of $L$.''

\paragraph{Right-angled Artin groups and right-angled Coxeter groups} 
Suppose $L^1$ is a simplicial graph with vertex set $V$.
The \emph{flag complex determined by} $L^1$ is the simplicial complex $L$ whose simplices are the (vertex sets of) complete subgraphs of $L^1$.
Associated to $L^1$ there is a RAAG, $A_L$, and a right-angled Coxeter group (abbreviated RACG), $W_L$.
These groups are defined by presentations as follows.
A set of generators for $A_L$ is $\{g_v\}_{v\in V}$; there are relations $[g_v, g_{v'}] = 1$ (i.e., $g_v$ and $g_{v'}$ commute) whenever $\{v,v'\}\in \edge L^1$.
The RACG $W_L$ is the quotient of $A_L$ formed by adjoining the additional relations $(g_v)^2=1$, for all $v\in V$.
(Usually, we denote the image of a generator in $W_L$ by $s_v$ rather than $g_v$.)

Let $T^V$ denote the product $(S^1)^V$.
Each copy of $S^1$ is given a (cubical) cell structure with one vertex $e_0$ and one edge.
For each simplex $\gs\in L$, $T(\gs)$ denotes the subset of $T^V$ consisting of those points $(x_v)_{v\in V}$ such that $x_v=e_0$ whenever $v$ is not a vertex of $\gs$.
So, $T(\gs)$ is a standard subtorus of $T^V$; its dimension is $\dim \gs +1$.
The \emph{standard classifying space} for $A_L$ is the subcomplex $X_L$ of $T^V$ defined as the union of the subtori $T(\gs)$ over all simplices $\gs$ in $L$:
\[
	X_L:= \bigcup_{\gs\in L} T(\gs).
\]
The space $X_L$ is sometimes called the ``Salvetti complex.'' Its $2$-skeleton is the presentation complex for $A_L$; so, $\pi_1(X_L)=A_L$.
There is a natural cubical cell structure on $X_L$ with a cube of dimension $\dim \gs +1$ for each $\gs \in L$.
The link of the 0-cell in $X_L$ is $\OL$.
We note that $\OL$ is also a flag complex.
So, the induced cubical structure on the universal cover $\XLT$ is $\cat(0)$.
Hence, $\XLT$ is contractible, i.e., $X_L$ is a model for $BA_L$.
(For more details on $X_L$, see \cite{cd95}*{Section 3}.) Choose a base point $b\in EA_L$ ($=\XLT$) which is a lift of the $0$-cell.
Following \cite{BB}*{Section 6} define a \emph{sheet} in $EA_L$ to be a component of preimage of a standard subtorus.
Let $\rr(\gs)$ be the sheet corresponding to $T(\gs)$ which contains $b$.
Then $Y:=\bigcup_{\gs\in L} \rr(\gs)$ is a convex subcomplex of $EA_L$ isometric to the Euclidean cone on $\OL$.
This gives an embedding of $\OL$ in $\pinfty A_L$.

For any flag complex $K$ there is a standard $\cat(0)$ cubical complex $\gS_K$ (sometimes called the ``Davis complex'') on which the right-angled Coxeter group $W_K$ acts as a cocompact reflection group.
The cubical complexes $\gS_\OL$ and $\XLT$ are identical (cf.
\cite{dj00}); moreover, $W_\OL$ and $A_L$ have a common subgroup, which is of finite index in each.
Also, as is shown by Hsu and Wise in \cite{hw99} there is an embedding $A_L\hookrightarrow W_\OL$ (usually as a subgroup of infinite index) defined by $g_v\mapsto s_{v+}s_{v-}$\,, where $s_{v+}$ and $s_{v-}$ are the generators of $W_\OL$ corresponding to the vertices $v^{+}$ and $v^{-}$, respectively.
\begin{proposition}\label{p:ol}
	Suppose $\OL$ embeds as a full subcomplex of a flag triangulation of $S^m$.
	Then $\actdim A_L \leq m+1$.
\end{proposition}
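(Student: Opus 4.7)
The plan is to exploit the embedding $A_L\hookrightarrow W_\OL$ due to Hsu--Wise together with Davis's construction of a contractible manifold on which a right-angled Coxeter group acts when its nerve is a flag triangulation of a sphere. Call the given flag triangulation of $S^m$ by the letter $S$, so that $\OL\subset S$ is a full subcomplex.

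First I would invoke the standard fact from Davis's theory of reflection groups that if $S$ is a flag triangulation of $S^m$, then $\gS_S$ is a contractible $(m+1)$-manifold on which $W_S$ acts properly and cocompactly by reflections. This follows because the cube complex $\gS_S$ is $\cat(0)$ (its vertex links are flag), and the link of any vertex in $\gS_S$ is combinatorially $S$, hence $S^m$; so $\gS_S$ is a topological manifold of dimension $m+1$.

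Next I would produce a proper action of $A_L$ on $\gS_S$. The map on generators $s_v\mapsto s_v$ (for $v$ a vertex of $\OL$) defines a homomorphism $W_\OL\to W_S$. Since $\OL$ is a \emph{full} subcomplex of the flag complex $S$, the subgroup generated by the reflections in vertices of $\OL$ is the special (parabolic) subgroup associated to this vertex set, and such parabolic subgroups are known to inject; the inclusion $\gS_\OL\hookrightarrow\gS_S$ of Davis complexes is an isometric embedding onto a convex subcomplex. Composing with the Hsu--Wise embedding $A_L\hookrightarrow W_\OL$ (sending $g_v\mapsto s_{v^+}s_{v^-}$) yields an embedding $A_L\hookrightarrow W_S$. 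Since $W_S$ acts properly on $\gS_S$, so does every subgroup, in particular $A_L$.

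Putting these together, $A_L$ acts properly on the contractible $(m+1)$-manifold $\gS_S$, which by definition gives $\actdim A_L\le m+1$. The only place that requires a moment's care is the injectivity of $W_\OL\to W_S$, which is the standard fact about parabolic subgroups of right-angled Coxeter groups corresponding to full subcomplexes of the nerve; everything else is a direct assembly of ingredients already recalled in Section~\ref{s:prelim}.
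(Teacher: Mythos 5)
Your proposal is correct and is essentially the paper's own argument: the paper likewise notes that $\gS_K$ is a contractible $(m+1)$-manifold for a flag triangulation $K$ of $S^m$ and uses the chain $A_L\subset W_\OL\subset W_K$ (Hsu--Wise plus the full-subcomplex/special-subgroup inclusion) to get a proper action. You simply spell out the injectivity points that the paper leaves implicit.
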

\begin{proof}
	Let $K$ be the flag triangulation of $S^m$.
	Then $\gS_K$ is a (contractible) $(m+1)$-manifold (cf.~\cite{davisbook}*{Theorem~10.6.1}).
	Since $A_L \subset W_\OL\subset W_K$, $A_L$ acts freely and properly on $\gS_K$.
\end{proof}

To implement this proposition we need the method of ``partial barycentric subdivision,'' which is explained in the following lemma.
\begin{lemma}\label{l:partial-bary}
	Suppose a flag complex $L$ is a subcomplex of another simplicial complex $K$.
	Then there is a subdivision $K'$ of $K$ such that
	\begin{enumeratea}
		\item $L$ is a full subcomplex of $K'$ and
		\item $K'$ is a flag complex.
	\end{enumeratea}
\end{lemma}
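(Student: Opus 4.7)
The plan is to construct $K'$ by \emph{partial barycentric subdivision} relative to $L$: keep the vertex set of $L$ untouched, and for every simplex $\sigma \in K\setminus L$ introduce a new vertex $b_\sigma$ (its barycenter). Declare the simplices of $K'$ to correspond to chains $\tau \subsetneq \sigma_1 \subsetneq \cdots \subsetneq \sigma_p$ in $K$, where $\tau \in L$ (possibly empty) and $\sigma_1,\dots,\sigma_p \in K\setminus L$; the vertex set of the corresponding simplex consists of the vertices of $\tau$ together with $b_{\sigma_1},\dots,b_{\sigma_p}$. An induction on the dimension of simplices outside $L$, coning each $\sigma\in K\setminus L$ to $b_\sigma$ over the already-constructed subdivision of $\partial \sigma$, confirms that this yields a genuine simplicial subdivision of $K$ that leaves the subcomplex $L$ intact.

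For part (a), any simplex of $K'$ whose vertex set lies in $L$ can contain no barycenter vertex, so it corresponds to a chain with $p=0$ and is therefore just a simplex of $L$. Hence $L$ is a full subcomplex of $K'$.

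For part (b), I would first catalogue the edges of $K'$: two vertices $v,w\in L$ span an edge iff $\{v,w\}$ is already an edge of $L$; a vertex $v\in L$ and a barycenter $b_\sigma$ span an edge iff $v$ is a vertex of $\sigma$; and two barycenters $b_\sigma, b_{\sigma'}$ span an edge iff one of $\sigma,\sigma'$ is a proper face of the other. Now suppose $\{v_1,\dots,v_r, b_{\sigma_1},\dots,b_{\sigma_p}\}$ is a pairwise-adjacent set of vertices of $K'$. Flagness of $L$ produces a simplex $\tau\in L$ spanned by $v_1,\dots,v_r$. The $\sigma_j$ are pairwise comparable under inclusion, and a finite collection of pairwise comparable sets is totally ordered, so after relabeling $\sigma_1\subsetneq\cdots\subsetneq\sigma_p$. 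Adjacency of each $v_i$ with $b_{\sigma_1}$ forces $v_i\in\sigma_1$, hence $\tau\subseteq\sigma_1$; this inclusion is strict because $\tau\in L$ while $\sigma_1\notin L$. Thus $\tau\subsetneq\sigma_1\subsetneq\cdots\subsetneq\sigma_p$ is a chain of the required form, producing a simplex of $K'$ on the given vertex set, as needed.

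The main obstacle is the careful bookkeeping in the three-case description of the edges of $K'$ and the verification that the proposed cell structure really is a simplicial subdivision of $K$ (compatibility on shared boundaries of stars of barycenters). Both are standard for barycentric subdivision once set up correctly; everything else is a direct unwinding of the definitions, combined with flagness of $L$ for the edges lying in $L$.
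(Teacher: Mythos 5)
Your construction is exactly the paper's ``partial barycentric subdivision of $K$ relative to $L$'' (the paper builds it inductively by coning $(\partial\gs)'$ to a new vertex for each $\gs\notin L$, which is the same complex as your chain description), so the approach is essentially identical. Your verification of (a) and (b) via the edge catalogue and flagness of $L$ correctly fills in what the paper leaves as ``easily checked.''
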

\begin{proof}
	The complex $K'$ will be called the \emph{partial barycentric subdivision of $K$ relative to $L$}.
	For each $i\ge 0$, let $K^{(i)}$ denote the set of $i$-simplices in $K$.
	The complex $K'$ will have a new vertex $v_\gs$ for each $\gs\in K^{(i)} - L^{(i)}$, with $i>0$.
	Define the skeleta of $K'$ by induction on dimension.
	First, subdivide each edge in $K^{(1)} - L^{(1)}$ by introducing a midpoint.
	Suppose, by induction, that $i\ge 2$ and that the $(i-1)$-skeleton of $K'$ has been defined.
	Let $\gs\in K^{(i)}$.
	If the $1$-skeleton of $\partial \gs$ lies in $L$, then, since $L$ is flag, $\gs\in L$ and we leave it unchanged.
	If $\gs\in K^{(i)} - L^{(i)}$, then, by inductive hypothesis, $(\partial \gs)'$ has been defined.
	Define $(\gs)'$ to be the result of coning $(\partial \gs)'$ to $v_\gs$.
	It is then easily checked that $K'$ has properties (a) and (b).
\end{proof}

\section{Extending triangulations} 
In \cite{akin} Akin proved the following.
\begin{theorem}[Akin \cite{akin}*{\S VII, Cor.~3, p.~468}]\label{t:akin}
	Suppose $(X,X_0)$ is a locally unknotted polyhedral pair.
	Then any triangulation of $X_0$ extends to a triangulation of $X$.
\end{theorem}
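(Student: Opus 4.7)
The plan is to build the triangulation of $X$ outward from $X_0$ using the transverse product structure that local unknottedness provides. My first step is to construct a PL regular neighborhood $N$ of $X_0$ in $X$ together with a PL retraction $r\colon N\to X_0$. Local unknottedness is precisely the hypothesis one needs for such an $N$ to exist with a locally product-like structure: for each simplex $\sigma$ of the given triangulation $T_0$ of $X_0$, the preimage $r^{-1}(\sigma)$ should be PL homeomorphic to the join $\sigma \ast F_\sigma$, where $F_\sigma$ is a ``transverse link'' that depends only on the local type of $(X,X_0)$ at $\sigma$ and that fits together compatibly under face inclusions.

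With this in hand, I would triangulate $N$ as a derived neighborhood of the subcomplex $T_0$, by induction on $\dim\sigma$. At each stage one chooses a triangulation of the transverse link $F_\sigma$ extending those already fixed on its boundary, and then forms the join $\sigma \ast F_\sigma$; standard PL extension across a ball guarantees that such a compatible extension exists at every step. Collecting these joins produces a triangulation of $N$ whose restriction to $X_0$ is exactly $T_0$. To finish, I extend across the complement: $\partial N$ is bicollared in $X\setminus \operatorname{int}(N)$, again by the local unknottedness hypothesis, and any triangulation of a polyhedron along a bicollared subpolyhedron extends across it by standard PL means (essentially, triangulate the exterior abstractly and then adjust it using the collar). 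Gluing along $\partial N$ yields the sought-after triangulation of $X$ restricting to $T_0$ on $X_0$.

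The main obstacle is the coherence in the derived-neighborhood construction: choosing the transverse triangulations of the $F_\sigma$ so that they actually patch together into an honest simplicial complex, and not merely into a cell complex with the right underlying polyhedron. This is exactly where local unknottedness earns its keep, because it ensures that the $F_\sigma$ form a well-defined family of pairwise-compatible PL-standard pieces, so that choices made on lower-dimensional simplices propagate unobstructed to higher-dimensional ones. The remainder of the argument --- existence of PL regular neighborhoods, collarings of their boundaries, and extension of triangulations across a collared subpolyhedron --- is routine PL topology in the sense of Zeeman and Rourke--Sanderson.
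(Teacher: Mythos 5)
This statement is not proved in the paper at all: it is quoted verbatim from Akin (\S VII, Cor.~3 of \cite{akin}), so the only ``proof'' on record is Akin's own, and any argument offered here has to carry the full weight of his machinery. Your proposal does not. The decisive step --- that local unknottedness supplies a regular neighborhood $N$ of $X_0$ with a retraction $r$ such that $r^{-1}(\sigma)\cong \sigma * F_\sigma$ for every simplex $\sigma$ of the \emph{given, arbitrary} triangulation $T_0$, with transverse links $F_\sigma$ compatible under face inclusions --- is asserted, not proved, and it is essentially the theorem itself (indeed a stronger, block-bundle-style statement). Local unknottedness is a pointwise condition on the link pairs $(\lk(x,X),\lk(x,X_0))$; it is not a local product or local triviality condition along $X_0$. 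In the generality of polyhedral pairs there is no normal block bundle theory to invoke: the local type of $X$ transverse to $X_0$ can change inside the open simplices of $T_0$, because nothing forces $T_0$ to be compatible with the intrinsic stratification of the pair $(X,X_0)$. For such a simplex $\sigma$ the preimage $r^{-1}(\sigma)$ is simply not a join $\sigma * F_\sigma$ with a single fiber, so the inductive ``choose a triangulation of $F_\sigma$ extending those on its boundary'' scheme has nothing to induct on. You flag the coherence of the $F_\sigma$ as ``the main obstacle'' and then dispose of it by saying local unknottedness ``earns its keep'' there --- but converting the pointwise unknottedness of links into coherent extension data over an arbitrary triangulation is exactly the content of Akin's argument (via his theory of unknotted cone pairs, intrinsic dimension, and an inductive extension that never posits a globally trivial transverse structure), and it is the part your sketch omits.

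The remaining steps are not where the trouble lies: interpolating between two PL triangulations of $\partial N$ on a collar $\partial N\times I$ and gluing a triangulation of the closure of $X- N$ is standard, as you say. But as written the proposal begs the question at its first move, so it does not constitute a proof of Akin's theorem.
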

 \noindent (The notion of ``local unknottedness'' is also defined in \cite{akin}*{p.~414}.) 
A corollary to Theorem~\ref{t:akin} is the following.
\begin{proposition}\label{p:emb-act}
	Suppose $L$ is a flag complex and that $\embdim \OL > \dim L+2$.
	Then
	\[
		\actdim A_L\leq \embdim \OL + 1.
	\]
\end{proposition}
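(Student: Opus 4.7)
The plan is to leverage Proposition~\ref{p:ol}: it suffices to produce a flag triangulation of $S^m$ (with $m=\embdim\OL$) in which $\OL$ sits as a full subcomplex. Start by choosing a PL embedding $\iota\colon\OL\hookrightarrow S^m$ realizing the embedding dimension. I would like to install $\iota(\OL)$ as a subcomplex of a triangulation of $S^m$ whose restriction to $\iota(\OL)$ agrees with the given triangulation; then Lemma~\ref{l:partial-bary} (applied to this subcomplex, which we already know is a flag complex) yields a subdivision that is a flag triangulation of $S^m$ in which $\OL$ is full. Invoking Proposition~\ref{p:ol} gives $\actdim A_L \leq m+1 = \embdim\OL + 1$.

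To get the triangulation extension, I would appeal to Akin's Theorem~\ref{t:akin}, which demands that the polyhedral pair $(S^m, \iota(\OL))$ be locally unknotted. This is where the hypothesis $\embdim\OL > \dim L+2$ enters: since $\dim\OL=\dim L$, the codimension of $\OL$ in $S^m$ is at least $3$. Classical PL topology (Zeeman's unknotting theorem, applied to the links) tells us that any PL embedding of codimension $\ge 3$ is locally unknotted in the sense of \cite{akin}. Thus Akin's theorem produces a triangulation of $S^m$ extending the given triangulation of $\OL$.

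The assembly is then straightforward: with $\OL$ already a subcomplex of a triangulation of $S^m$, apply the partial barycentric subdivision of Lemma~\ref{l:partial-bary} relative to $\OL$ (legitimate because $\OL$ is a flag complex). The output is a flag triangulation of $S^m$ containing $\OL$ as a full subcomplex, and Proposition~\ref{p:ol} closes the argument.

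The main obstacle I anticipate is the local unknottedness verification, since Akin's definition requires a precise hypothesis on the links of simplices in the pair, not merely a codimension count. If the general codimension-$\ge 3$ unknotting result is not quoted in exactly the form needed, one can argue directly: at each point of $\iota(\OL)$ the link pair $(S^{m-1}, \mathrm{link})$ is a PL sphere pair in codimension $\ge 3$, and Zeeman's unknotting theorem shows such a pair is standard. Everything else—the full-subcomplex maneuver and the invocation of Proposition~\ref{p:ol}—is routine.
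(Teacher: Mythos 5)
Your proposal is correct and follows essentially the same route as the paper: PL embed $\OL$ in $S^m$, observe that codimension $\ge 3$ gives local unknottedness (the paper quotes Akin's Corollary 9b directly for this, rather than deriving it from Zeeman's unknotting theorem), extend the triangulation by Theorem~\ref{t:akin}, make it flag with $\OL$ full via Lemma~\ref{l:partial-bary}, and conclude with Proposition~\ref{p:ol}. The only difference is the citation used for the local unknottedness step, which is immaterial.
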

\begin{proof}
	Suppose $\OL$ piecewise linearly embeds in $S^m$.
	According to \cite{akin}*{Corollary 9b, p.
	454--455} for $m > \dim L+2$, the embedding is locally unknotted.
	By Theorem~\ref{t:akin}, $\OL$ extends to a triangulation $K$ of $S^m$.
	By Lemma~\ref{l:partial-bary} we can assume that $K$ is a flag complex and that $\OL$ is a full subcomplex.
	The claim now follows from Proposition~\ref{p:ol}.	
\end{proof}

\section{Evaluating the van Kampen obstruction}\label{s:vk}
Given a simplicial complex $K$, from now on $(K\times K)-\gD$ will denote the \emph{simplicial deleted product}, i.e., the union of cells of the form $\gs \times \gt$ where $\gs$ and $\gt$ are (closed) simplices in $K$ and $\gs\cap \gt = \emptyset$.
The \emph{configuration space} $\cac(K)$ is the quotient of $(K\times K)-\gD$ by the involution which switches the factors.
(The simplicial deleted product is an equivariant deformation retract of the actual complement of the diagonal.) The unoriented cells of $\cac(K)$ are represented by unordered pairs $\{ \gs, \gt \}$ of disjoint simplices of $K$.
Note that switching the factors of a cell in $K\times K$ changes orientation by a factor $(-1)^{\dim \gs \dim \gt}$.
To account for this we will represent an oriented cell by an equivalence class $[ \gs, \gt ]$ of ordered pairs $(\gs, \gt)$ of oriented simplices where the equivalence relation is defined by $( \gt, \gs ) \sim (-1)^{\dim \gs \dim \gt}( \gs, \gt )$.

Let $c:\cac(K)\to \rr P^\infty$ be the map which classifies the double cover.

There are two actions of the $\pi_1(\rr P^\infty)$ ($=\zz/2)$ on $\zz$: the trivial action and the nontrivial action where the generator of $\zz/2$ acts by $-1$.
The trivial and nontrivial $\zz/2$-module structures on $\zz$ will be denoted by $\zz^+$ and $\zz^-$, respectively.
Of course,
\[
	H^i(\rr P^\infty;\zz^+)=
	\begin{cases}
		\zz, &\text{if $i=0$;}\\
		\zz/2, &\text{if $i>0$ and is even;}\\
		0, &\text{if $i$ is odd.}
	\end{cases}
\]
The coefficient sequence $0\to \zz^+\to \zz[\zz/2]\to \zz^-\to 0$ induces a long exact sequence in cohomology and since $H^*(\rr P^\infty;\zz[\zz/2])$ vanishes in positive degrees,
\[
	H^i(\rr P^\infty;\zz^-)=
	\begin{cases}
		\zz/2, &\text{if $i$ is odd;}\\
		0, &\text{if $i$ is even.}
	\end{cases}
\]

The Euler class $e_1$ of the canonical line bundle over $\rr P^\infty$ is the nonzero element of $H^1(\rr P^\infty; \zz^-) \cong \zz/2$.
Moreover, if $\geps$ denotes the sign of $(-1)^m$, then $e_1^m$ is the nonzero element of $H^m(\rr P^\infty; \zz^\geps)\cong \zz/2$.
The \emph{integral van Kampen obstruction} $\vk^m(K)$ in degree $m$ is the element of $H^m(\cac(K);\zz^\geps)$ defined by
\[
	\vk^m(K) = c^*(e_1^m).
\]
N.B. Since $e_1^m$ has order $2$, $\vk^m(K)$ has order $\leq 2$.

In what follows we shall be concerned almost exclusively with the case where $m$ is even (so that the coefficients are untwisted).

Suppose $\dim K = k$ and that we want to evaluate the van Kampen obstruction in the top degree $m=2k$.
In \cite{mtw11}*{Appendix D} we find the following description of a cocycle $\nu$ representing the integral van Kampen obstruction $\vk^{2k}(K) \in H^{2k}(\cac(K);\zz)$.
First choose a total ordering, $<$, of the vertices of $K$.
Suppose $\gs=[v_0,\dots, v_k]$ and $\gt=[w_0,\dots, w_k]$ are $k$-simplices with their vertices in increasing order.
Then the value of $\nu$ on the oriented $2k$-cell $[ \gs,\gt ]$ is given by
\begin{equation}\label{e:vktr}
	\nu ([ \gs,\gt ]) =
	\begin{cases}
		+1, &\text{if $v_0<w_0<\cdots <v_k<w_k$,}\\
		(-1)^{k}, &\text{if $w_0<v_0<\cdots <w_k<v_k$,}\\
		\ \ 0, &\text{otherwise.}
	\end{cases}
\end{equation}
(The second clause agrees with our convention on switching factors since $(-1)^{k}=(-1)^{k^{2}}$.) We will say that $\gs$ and $\gt$ are \emph{meshed} if their vertices satisfy the above relationships for $\nu([\gs,\gt]) \neq 0$.

Reducing modulo $2$ gives a cocycle representative for $\vkt^{2k}(K) \in H^{2k}(\cac(K);\zz/2)$:
\begin{equation*}
	\nu_2( \{\gs,\gt\})=
	\begin{cases}
		1, &\text{if $\gs$ and $\gt$ are meshed}\\
		0, &\text{otherwise.}
	\end{cases}
\end{equation*}
\begin{remark}
	The formulas for these cocycles are surprisingly concrete --- they come from using the total ordering to embed $K$ in $\rr^{2k}$ by mapping linearly to the moment curve in $\rr^{2k}$.
	Specifically, if $\gamma(t) = (t, t^2, \dots, t^{2k}) \in \rr^{2k}$, the mapping is determined by sending the $i^{th}$ ordered vertex of $K$ to $\gamma(i)$.
	It turns out that this is a general position map and that the intersections are given by \eqref{e:vktr}.
\end{remark}

\section{Some technical lemmas}

This section contains four lemmas, which we will use in the next section to determine whether or not $\vk(OL)$ vanishes.

\paragraph{The map s.}
There is a transfer map $t: C_{*}(\cac(\OL)) \to C_{*}(\OL \times \OL)$ defined by $[\gs, \gt] \mapsto (\gs,\gt) +(-1)^{\dim \gs \dim \gt}(\gt,\gs)$.
Composing with $p:\OL\to L$ in the second coordinate gives a chain map $s:C_{*}(\cac(\OL)) \to C_{*}(\OL \times L)$ defined by
\[
	s:[\gs, \gt] \mapsto (\gs,p(\gt))+(-1)^{\dim \gs \dim \gt}(\gt, p(\gs)).
\]

\paragraph{The chain $\gO$.}
Suppose $M$ is a $\zz/2$-valued $k$-cycle on $L$.
Identify $M$ with its support (i.e., $M$ is identified with the subcomplex which is the union of those $k$-simplices $\gs$ which have nonzero coefficient in $M$.) Choose a $k$-simplex $\gD\in M$ with vertices $v_0,\dots, v_k$.

Let $v_i^{\pm}$ denote the two vertices in $OM$ lying above $v_i$.
Let $D$ be the full subcomplex of $\OL$ containing $M^-$ and the doubled vertices $v_0^\pm,\dots, v_k^\pm$ of $\gD$.
We say that $D$ is $M$ \emph{doubled over the simplex} $\gD$.
Define a chain $\gO\in C_{2k}(\cac(D);\zz/2)$ by declaring the $2k$-cell $[\gs,\gt]$ of $\cac(D)$ to be in $\gO$ if and only if
\begin{itemize}
	\item $\sigma \cap \tau = \emptyset$, and
	\item $\Delta^0 \subset p(\sigma) \cup p(\tau)$.
	(Here $\gD^0$ denotes the $0$-skeleton of $\gD$.)
\end{itemize}

\paragraph{The cocycle $\mu$.}
Fix a total ordering \( v_0 < v_1 < \dots < v_n \) on the vertex set of $L$ and extend that to a total ordering on the vertex set of $\OL$ by defining
\[
	v_0^- < v_0^+< v_1^- < v_1^+< \cdots < v_n^- < v_n^+.
\]
Define a top degree cocycle $\mu$ in $Z^{2k}(\OL \times L; \zz )$, $k=\dim L$, by
\begin{equation*}
	\mu ( \gs,b ) =
	\begin{cases}
		1, &\text{if $v_0 \le w_0 < v_{1} \le \cdots < v_k \le w_k$,}\\
		0, &\text{otherwise.}
	\end{cases}
\end{equation*}

Here $L$ denotes the $-$ copy of $L$ in $\OL$, i.e., each $w_i$ has sign $-$.
If $\mu ( \gs,b ) = 1$, we say that $\gs$ and $b$ mesh \emph{nonstrictly}.

In the next four lemmas we describe some properties of $s$, $\gO$ and $\mu$.
\begin{lemma}\label{l:pushforward}
	$s_{*}(\gO)=O\gD \times M$ in $C_{2k}(\OL \times L; \zz/2 )$.
\end{lemma}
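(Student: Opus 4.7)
Since $s[\sigma,\tau] = (\sigma, p(\tau)) + (\tau, p(\sigma))$ modulo $2$, I plan to fix a $2k$-cell $(\alpha, \beta)$ of $\OL \times L$ and compute its coefficient in $s_{*}(\gO)$ as the parity of $N(\alpha,\beta)$, the number of $k$-simplices $\tau \in D$, $\tau\ne\alpha$, satisfying $p(\tau) = \beta$, $\alpha \cap \tau = \emptyset$, and $\gD^0 \subset p(\alpha) \cup \beta$ (tacitly requiring $\alpha \in D$, otherwise $[\alpha,\tau]$ is not a cell of $\cac(D)$). The lemma then reduces to showing $N(\alpha, \beta)$ is odd precisely when $\alpha \in O\gD$ and $\beta \in M$.

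The case $\alpha \in O\gD$, $\beta \in M$ I would handle by exhibiting the unique $\tau$: since $p(\alpha) = \gD$ the union condition is automatic, and a valid $\tau$ is pinned down vertex-by-vertex — each $v \in \beta \cap \gD^0$ must lift to the sign opposite that of $\alpha$ (forced by disjointness, with both lifts available in $D$), and each $v \in \beta \setminus \gD^0$ must lift to $v^-$ (the only lift of such a vertex available in $D$); so $N = 1$. When $\alpha \in O\gD$ but $\beta \notin M$, no lift of $\beta$ lies in $D$ (because $D$ only doubles simplices of $M$ at vertices of $\gD$), so $N = 0$.

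The substantive case is $\alpha \in D \setminus O\gD$. Then $p(\alpha) \in M$ with $p(\alpha) \ne \gD$, and $S := \gD^0 \setminus p(\alpha)$ satisfies $|S| \ge 1$. The union condition forces $S \subset \beta$, and disjointness forces $\beta \cap p(\alpha) \subset \gD^0$ (otherwise some vertex $v \in \beta\cap p(\alpha)$ outside $\gD$ would contribute $v^-$ to both $\alpha$ and $\tau$). Under these constraints $\tau$ is determined at every vertex of $\beta$ \emph{except} on $S$, where each of the $|S|$ vertices independently admits both lifts $v^{\pm}$; so $N = 2^{|S|}$, which is even. The hard part of the plan is exactly this last case: one must correctly pin down the constraints on $\beta$ and then recognize that the doubled vertices of $\gD$ missing from $p(\alpha)$ produce a free $(\zz/2)^{|S|}$-action on the set of valid $\tau$'s, which is what forces the count to vanish mod $2$.
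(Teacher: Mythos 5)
Your proposal is correct and is essentially the paper's own argument: both compute the mod-$2$ coefficient of each product cell $(\gs,b)$ of $\OL\times L$ by counting the lifts $\gt\subset D$ of $b$ that are disjoint from $\gs$, and your count ($1$ when $p(\gs)=\gD$, otherwise $0$ or $2^{\vert \gD^0\setminus p(\gs)\vert}$, hence even) is exactly the paper's formula $S=2^{\vert (b\cap\gD)-p(\gs)\vert}$ analyzed modulo $2$. The only point worth noting is that your step ``$\beta\notin M$ gives $N=0$'' relies on the same reading of the definition of $D$ (namely that every simplex of $D$ projects into $M$) that the paper's corresponding step ``$b\notin M$ implies $S=0$'' uses.
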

\begin{proof}
	We compute (modulo 2) the number of times $S$ that $(\gs,b)$ appears on the right hand side of the formula
	\[
		s_{*}([\gs,\gt])=(\gs,p(\gt))+(\gt, p(\gs)),
	\]
	as $[\gs,\gt]$ varies over $\gO$.
	If $\gD \not\subset p(\gs) \cup b $ or if $b\notin M$, then it follows from the definition of $\gO$ that $S=0$.
	So assume $\gD \subset p(\gs) \cup b$ and $b\in M$.
	The preimage of $(\gs,b)$ is the set of pairs $[\gs,\gt]$ in $\gO$ such that $p(\gt) = b$.
	Since $(\gs,p(\gt)) \ne (\tau,p(\gs))$ for such pairs, each such pair contributes once to $S$; so $S$ is the number of $\gt$ disjoint from $\gs$ such that $p(\gt) = b$.
	It follows that
	\[
		S=2^{\vert (b\cap \gD) - p(\gs) \vert }.
	\]
	So, modulo $2$, $S=1$ if and only if $ (b\cap \gD) \subset p(\gs)$.
	Combining this with $\gD \subset p(\gs) \cup b$, we get that $S=1$ if and only if $p(\gs)=\gD$.
	Since the condition $p(\gs)=\gD$ is equivalent to $\gs \in O\gD$, the lemma follows.
\end{proof}
\begin{lemma}\label{l:pullback}
	$s^{*}(\mu)=\nu$ in $C^{2k}(\cac(\OL); \zz).$
\end{lemma}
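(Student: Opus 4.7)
The plan is to verify $s^{*}(\mu)=\nu$ cell by cell on the $2k$-cells of $\cac(\OL)$. Since $\dim \OL=k$, every $2k$-cell has the form $[\gs,\gt]$ for a pair of disjoint $k$-simplices $\gs,\gt$ of $\OL$, and by the definition of $s$,
\[
	s^{*}(\mu)([\gs,\gt]) \;=\; \mu(\gs,p(\gt)) + (-1)^{k^{2}}\mu(\gt,p(\gs)) \;=\; \mu(\gs,p(\gt)) + (-1)^{k}\mu(\gt,p(\gs)).
\]
The task therefore reduces to determining precisely when each of the two $\mu$-terms on the right is nonzero.

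The central claim I would establish is: writing $\gs=[v_0,\dots,v_k]$ and $\gt=[w_0,\dots,w_k]$ with vertices in the $\OL$-ordering, one has $\mu(\gs,p(\gt))=1$ if and only if $v_0<w_0<v_1<w_1<\cdots<v_k<w_k$, i.e., exactly the ``$\gs$-first'' meshing pattern that contributes $+1$ to $\nu$. To prove this I would unpack the definition of $\mu$: the vertices of $p(\gt)$, viewed in $\OL$ as the $-$ copies $p(w_0)^{-}<\cdots<p(w_k)^{-}$, must satisfy the nonstrict interleaving $v_i\le p(w_i)^{-}<v_{i+1}$. Using the explicit $\OL$-ordering $v_0^{-}<v_0^{+}<v_1^{-}<v_1^{+}<\cdots$ together with the disjointness $\gs\cap\gt=\emptyset$, a short case split --- according to whether $p(v_i)$ equals or is strictly less than $p(w_i)$, and similarly for $p(w_i)$ versus $p(v_{i+1})$ --- converts each nonstrict inequality involving $p(w_i)^{-}$ into a strict inequality $v_i<w_i$ (resp.\ $w_i<v_{i+1}$) directly between vertices of $\OL$. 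The converse follows by reversing the implications.

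By the symmetry between the roles of $\gs$ and $\gt$, the same reasoning yields $\mu(\gt,p(\gs))=1$ if and only if $\gs,\gt$ are meshed in the complementary ``$\gt$-first'' pattern $w_0<v_0<w_1<v_1<\cdots<w_k<v_k$. These two patterns are mutually exclusive, so comparing with the definition of $\nu$ in \eqref{e:vktr} gives $s^{*}(\mu)([\gs,\gt])=\nu([\gs,\gt])$ in each of the three cases (meshed $\gs$-first, meshed $\gt$-first, or neither), with matching values $1$, $(-1)^{k}$, and $0$ respectively. The only delicate point is the correct translation between the nonstrict comparisons built into $\mu$ and the strict ones defining $\nu$; the key observation that makes this work is that an equality $v_i=p(w_i)^{-}$, combined with disjointness of $\gs$ and $\gt$, forces $w_i=p(w_i)^{+}$ and hence the strict inequality $v_i<w_i$ in $\OL$.
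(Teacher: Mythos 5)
Your proof is correct and follows essentially the same route as the paper: evaluate on each $2k$-cell, unpack the nonstrict meshing condition defining $\mu$, and use the interleaved ordering $v^-<v^+$ together with disjointness of $\gs$ and $\gt$ to translate the nonstrict inequalities into the strict meshing pattern defining $\nu$. The only organizational difference is that the paper normalizes to $v_0<w_0$ so that the term $\mu(\gt,p(\gs))$ vanishes at the outset, whereas you prove the two-sided characterization of when each term equals $1$; the content is the same.
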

\begin{proof}
	Let $[\gs,\gt]$ be an oriented cell in $\cac(\OL)$.
	We can assume that $v_{0} < w_{0}$.
	This implies $w_{0} > p(v_{0})$ and therefore, that $\mu(\gt,p(\gs))=0$.
	So, we need to check that $\mu(\gs,p(\gt))=\nu([\gs, \gt])$.
	There are two cases to consider:
	\begin{description}
		\item[$\gs$ and $\gt$ mesh.] We have $v_0 < w_0 < v_{1} < \cdots < v_k < w_k$.
		Applying $p$ to the $w$ terms gives $v_0 \le p(w_0) < v_{1} \le \cdots < v_k \le p(w_k)$, and $\mu(\gs,p(\gt))=1$ as required.
		\item[$\gs$ and $\gt$ do not mesh.] This means that in the meshing string at least one of the inequalities (but not the first one) is reversed.
		There are two cases:
		
		If $v_{i} > w_{i}$, then $v_{i} >p(w_{i})$, so $\mu(\gs,p(\gt))=0$.
		
		If $w_{i} > v_{i+1}$, then $p(w_{i}) \ge v_{i+1}$, so $\mu(\gs,p(\gt))=0$.
	\end{description}
\end{proof}
We say that $(M,\gD)$ \emph{satisfies the $*$-condition} if
\[
	\tag{$*$} \text{For all $\gs,\gt \in M$ with $\Delta^0 \subset \sigma \cup \tau$ we have $\sigma \cap \tau \subset \Delta$.}
\]
\begin{lemma}\label{l:omegacycle}
	Suppose $(M,\gD)$ satisfies the $*$-condition.
	Then $\gO$ is a cycle.
\end{lemma}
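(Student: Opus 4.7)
The plan is to fix a $(2k-1)$-cell $[\alpha,\beta]$ of $\mathcal{C}(D)$ and show that its coefficient in $\partial\Omega$ vanishes modulo $2$. Dimension counting forces one of $\alpha,\beta$ to be a $k$-simplex and the other a $(k-1)$-simplex (since each projects injectively into $L$ and $\dim L=k$); so, after relabeling, $\alpha$ is a $k$-simplex and $\beta$ is a $(k-1)$-simplex. The $2k$-cells of $\Omega$ having $[\alpha,\beta]$ as a codimension-one face are then precisely those of the form $[\alpha,\beta\cup\{u\}]$ where $u\in V(D)\setminus(\alpha\cup\beta)$, $\beta\cup\{u\}$ is a simplex of $D$ (equivalently of $\OL$, since $D$ is a full subcomplex), and $\Delta^0\subset p(\alpha)\cup p(\beta)\cup\{p(u)\}$. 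Writing $P=p(\alpha)$ and $Q=p(\beta)$, the problem reduces to showing that the number $N$ of admissible $u$ is even.

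I would first dispose of the easy cases by splitting on $|\Delta^0\setminus(P\cup Q)|$. If this number is at least $2$, no single vertex $u$ can supply the missing vertices of $\Delta$ and $N=0$. If it equals $\{v_i\}$, then $p(u)$ is forced to be $v_i$, so $u\in\{v_i^+,v_i^-\}$; both lifts lie in $V(D)$ (as $v_i\in\Delta^0\subset M^0$) and outside $\alpha\cup\beta$ (as $v_i\notin P\cup Q$), and admissibility of $\beta\cup\{u\}$ depends only on $p(u)=v_i$. Hence $v_i^+$ and $v_i^-$ are simultaneously admissible, and $N\in\{0,2\}$.

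The substantive case is $\Delta^0\subset P\cup Q$. A lift-by-lift count reduces $N\pmod 2$ to the cardinality of
\[
T\;=\;\bigl\{\,v\in\operatorname{Lk}_L(Q)^{(0)}\cap M^0 \;:\; v\in P \Leftrightarrow v\in\Delta^0\,\bigr\},
\]
since vertices $v\in\Delta^0$ with both signed lifts free contribute $2$, those with both lifts already used in $\alpha\cup\beta$ contribute $0$, and the lone $-$-lift of $v\in M^0\setminus\Delta^0$ contributes iff $v\notin P\cup Q$. This is where the $*$-condition is essential. Applied to $(P,Q)$, when both lie in $M$, it forces $P\cap Q\subset\Delta$; applied to $(P,Q\cup\{v\})$ when $Q\cup\{v\}\in M$, it forces the implication $v\in P\Rightarrow v\in\Delta^0$ for every $v\in\operatorname{Lk}_M(Q)^{(0)}$. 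Together with $\partial M=0$ (which makes $|\operatorname{Lk}_M(Q)^{(0)}|$ even when $Q\in M$, and zero when $Q\notin M$), these imply that the part of $T$ corresponding to $v$ with $Q\cup\{v\}\in M$ has even cardinality. The remaining contribution, from $v$ for which $Q\cup\{v\}$ lies in $L\setminus M$ (and the edge cases where $P$ or $Q$ itself is not in $M$), is paired by a parallel argument using $*$ once more to rule out the forbidden intersections.

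The main obstacle is precisely this last parity bookkeeping in the substantive case: the two easy cases are quick, but the core step requires carefully combining the $*$-condition with the cycle property of $M$ to exhibit the right involution on admissible lifts. I expect the write-up to make this explicit by pairing $v_i^+\leftrightarrow v_i^-$ where both lifts are free, and using $*$ to pair up the remaining singly-lifted vertices; everything else is formal dimension-counting.
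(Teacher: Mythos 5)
Your skeleton agrees with the paper's proof: the coefficient of $\partial\gO$ is computed on cells $[\ga,\beta]$ with $\ga$ a $k$-simplex and $\beta$ a $(k-1)$-simplex, the cofaces are the $[\ga,\beta * u]$, and the cases are split according to whether $\gD^0\subset p(\ga)\cup p(\beta)$; your treatment of the easy cases (zero or both lifts of the single missing vertex of $\gD$) is exactly the paper's case (i), and your reduction of the substantive case to a parity count is fine. The gap is the last step. The contribution you set aside --- vertices $v$ with $Q\cup\{v\}\in L\setminus M$, and the cases where $P$ or $Q$ is not in $M$ --- is not ``paired by a parallel argument using $*$'': the $*$-condition quantifies only over pairs of simplices \emph{of $M$}, so it says nothing about simplices of $L$ spanned by vertices of $M$ that are not in $M$, and no involution on these vertices is available. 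In fact, with your stated reading of admissibility (``$\beta\cup\{u\}$ a simplex of $D$, equivalently of $\OL$''), the parity claim you are trying to prove is false: take $L$ to be the $4$-cycle $abcd$ together with the chord $ac$, let $M$ be the $4$-cycle and $\gD=ab$ (this pair satisfies $*$); for $\ga=a^+b^+$ and $\beta=c^-$ there are exactly three admissible vertices, $a^-$, $b^-$, $d^-$, so the count is odd. Here the offending vertex is $v=a$ with $Q*v=ac\in L\setminus M$, and $*$ is simply silent about it.

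What closes the argument --- and what the paper actually does --- is that the relevant cofaces must lie over simplices of $M$, not merely of $L$: one works with $D$ sitting over $M$ (so $\ga*u\in D$ forces $p(\ga*u)\in M$), and hence the count is over $\lk_M(Q)$ rather than $\lk_L(Q)$. With that, your troublesome $L\setminus M$ contribution vanishes by definition, $P=p(\ga)$ is automatically a simplex of $M$, and your remaining argument for the $\lk_M$ part --- $*$ applied to $(P,Q*v)$ gives $v\in P\Rightarrow v\in\gD^0$, the hypothesis $\gD^0\subset P\cup Q$ gives the converse, and $\partial M=0$ mod $2$ gives evenness of $\lk_M(Q)^{(0)}$ --- is precisely the paper's proof, which phrases it as a bijection, induced by $p$, from the set of admissible vertices onto $\{y\in M^0 : p(\beta)*y\in M\}$, with $*$ used exactly once to show that for $y\notin\gD^0$ in this set the lift $y^-$ is not a vertex of $\ga$. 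So the deferred step is not formal bookkeeping; as sketched it cannot be completed, and the fix is to correct which cofaces are admissible, after which your first computation already finishes the proof.
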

\begin{proof}
	Let $\sigma$ be a $k$-simplex and $\alpha$ be a $(k-1)$-simplex in $D$.
	The argument divides into two cases.
	\begin{enumerate}[(i)]		
		\item $\Delta^0 \not\subset p(\sigma) \cup p(\alpha)$.
		In this case there are either two or zero choices for a vertex $x$ such that $(\alpha \ast x) \in D$ and $[\sigma, \alpha \ast x] \in \gO$, since $p(x)$ must be the missing vertex of $\gD$.
		\item $\Delta^0 \subset p(\sigma) \cup p(\alpha)$.
		This condition implies that if $y\in \gD$ and $( p( \ga ) * y ) \in M$, then exactly one of two preimages $x\in p^{-1}(y)$ is not a vertex of $\gs$.
		Moreover, it follows from our assumption that if $x$ is a vertex of $\gs$ and $( \ga * x ) \in D$, then $p(x) \in \gD$.
		It follows that $p$ restricts to a bijection from the set $\{ x\in D^{0}-\gs^{0} \ | \ \ga*x \in D\}$ to the set $\{y \in M^{0} \ |\ p(\ga)*y \in M\} $.
		Since $M$ is a cycle, the range has even cardinality, so there are an even number of $[\sigma, \alpha \ast x] \in \gO$.
	\end{enumerate}
\end{proof}
\begin{lemma}\label{l:muval}
	Suppose $M \in Z_{\dim L}(L;\zz/2)$ is a cycle of top degree.
	Then $\mu(O\gD \times M)=1$
\end{lemma}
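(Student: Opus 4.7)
My plan is to evaluate $\mu(O\gD\times M)$ directly as a mod-2 count of meshing pairs, then use the ordering on $\OL$ to identify the ones that actually contribute. The key observation is that equalities in the nonstrict meshing relation force the signs attached to the vertices of simplices in $O\gD$, so the count reduces to a combinatorial pinning problem.

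First I would write $O\gD=\sum_\geps\gs_\geps$ as its $\zz/2$ fundamental cycle, where $\gs_\geps$ has vertex set $\{u_0^{\geps_0},\dots,u_k^{\geps_k}\}$ and $u_0<\dots<u_k$ are the ordered vertices of $\gD$. In the extended order on $\OL$ these vertices of $\gs_\geps$ are automatically listed in increasing order. For a $k$-simplex $b$ of $L$ with vertices $w_0<\dots<w_k$, interpreting $b$ as $b^-\subset\OL$, the condition $\mu(\gs_\geps,b)=1$ reads
\[
u_0^{\geps_0}\le w_0^-<u_1^{\geps_1}\le w_1^-<\dots<u_k^{\geps_k}\le w_k^-,
\]
so $\mu(O\gD\times M)\equiv\sum_{b\in M}N(b)\pmod 2$, where $N(b)$ counts the sign patterns $\geps$ making this chain hold.

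Next I would analyze $N(b)$ for fixed $b$. Unpacking the ordering, $u_i^{\geps_i}\le w_i^-$ iff $u_i<w_i$, or $u_i=w_i$ and $\geps_i=-$; and $w_i^-<u_{i+1}^{\geps_{i+1}}$ iff $w_i<u_{i+1}$, or $w_i=u_{i+1}$ and $\geps_{i+1}=+$. Hence $N(b)\ne 0$ forces the interlacing $u_0\le w_0\le u_1\le\dots\le u_k\le w_k$ in $L$, and each equality pins one sign: $u_i=w_i$ forces $\geps_i=-$ (type A), while $w_{i-1}=u_i$ forces $\geps_i=+$ (type B). The two types cannot simultaneously constrain the same $\geps_i$, since $w_{i-1}=u_i=w_i$ would contradict $w_{i-1}<w_i$. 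So when the interlacing holds, $N(b)=2^{f(b)}$, where $f(b)$ is the number of unpinned indices.

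Finally I would observe that $N(b)$ is odd iff every $\geps_i$ is pinned. With no $w_{-1}$ available, $\geps_0$ must be pinned by type A, giving $u_0=w_0$. Inductively, if $u_j=w_j$ for all $j\le i$, then $u_{i+1}>u_i=w_i$ rules out the type B pinning $w_i=u_{i+1}$ of $\geps_{i+1}$, so $\geps_{i+1}$ must be pinned by type A: $u_{i+1}=w_{i+1}$. Thus $N(b)$ is odd only for $b=\gD$, where the unique valid simplex is $\gD^-$ and $N(\gD)=1$. Summing yields $\mu(O\gD\times M)\equiv 1\pmod 2$, as claimed. The only delicate step I anticipate is the collision-avoidance argument between the two equality constraints on a single $\geps_i$; but that reduces immediately to the strict ordering of the $w_j$'s, so the proof should go through cleanly.
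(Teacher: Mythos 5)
Your proof is correct and is essentially the paper's argument: for each fixed $b\in M$ you count the meshing sign patterns on $O\gD$ and show the count is a power of $2$, even unless $b=\gD$, which is exactly the paper's observation that meshing depends only on the signs over $b\cap\gD$ so that for $b\neq\gD$ the meshing simplices pair off and only the cell $(\gD^-,\gD)$ contributes. Your pinning analysis just makes that parity count explicit.
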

\begin{proof}
	If a pair $(\gs, b)$ in $O\gD \times M$ meshes nonstrictly, then so does any pair $(\gs', b)$ where $\gs'$ agrees with $\gs$ over $b\cap \gD$.
	Since for $b\neq \gD$ there are an even number of such $\gs'$, the only contribution comes from the cell $(\gD,\gD)$.
\end{proof}

\section{Proof of the Main Theorem}

\subsection{The case where \texorpdfstring{$H_k(L;\zz/2)= 0$}{H\_k(L;Z/2)=0}}

We prove a somewhat stronger version of statement (2) of the Main Theorem.
\begin{theorem}	
	Suppose $L$ is a $k$-dimensional complex.
	If $H_k(L;\zz/2)=0$, then $\vk^{2k}(\OL)=0$.
	So, for $k\neq 2$, $\embdim \OL\leq 2k$.
	If $L$ is a flag complex, then $\actdim A_L\leq 2k+1$.
\end{theorem}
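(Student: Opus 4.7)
The plan is to derive integral vanishing of $\vk^{2k}(\OL)$ by comparing two competing torsion constraints on it. By Lemma~\ref{l:pullback}, $\nu = s^{*}(\mu)$ at the cochain level, so $\vk^{2k}(\OL) = s^{*}[\mu]$. On the one hand, $\vk^{2k}(\OL)$ is $2$-torsion, since $e_{1}^{2k}$ has order $2$. On the other hand, I will show that $[\mu] \in H^{2k}(\OL \times L;\zz)$ has odd order, whence $s^{*}[\mu]$ does too. Any class simultaneously annihilated by $2$ and by some odd integer must vanish, forcing $\vk^{2k}(\OL) = 0$.

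To see that $[\mu]$ has odd order, I first upgrade the hypothesis integrally. Since $\dim L = k$, the group $C_{k}(L;\zz)$ is free abelian and $B_{k}(L;\zz) = 0$, so $H_{k}(L;\zz)$ is a free abelian group; combined with $H_{k}(L;\zz/2) = 0$ this forces $H_{k}(L;\zz) = 0$. The universal coefficient exact sequence
\[
	0\to H_{k}(L;\zz)\otimes\zz/2 \to H_{k}(L;\zz/2) \to \mathrm{Tor}(H_{k-1}(L;\zz),\zz/2)\to 0,
\]
with vanishing middle term, also implies $H_{k-1}(L;\zz)$ is $2$-torsion-free. By the universal coefficient theorem for cohomology, $H^{k}(L;\zz) \cong \mathrm{Ext}(H_{k-1}(L),\zz)$ is isomorphic to the torsion subgroup of $H_{k-1}(L;\zz)$, so it has odd exponent. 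Since $\dim \OL = \dim L = k$, the integral Künneth formula in top degree has no Tor contribution (it would require cohomology above the dimension of $\OL$ or $L$) and collapses to
\[
	H^{2k}(\OL \times L;\zz) \;\cong\; H^{k}(\OL;\zz)\otimes_{\zz} H^{k}(L;\zz).
\]
Tensoring any abelian group with one of odd exponent yields an odd-exponent group, so $[\mu]$ has odd order, as required.

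For $k \neq 2$, the integral class $\vk^{2k}$ is the complete obstruction to piecewise linear embedding of a $k$-complex in $S^{2k}$ (recalled in the introduction), so vanishing of $\vk^{2k}(\OL)$ gives $\embdim \OL \leq 2k$. For a flag complex $L$ with $k \geq 3$, we have $2k > \dim L + 2$, so Proposition~\ref{p:emb-act} yields $\actdim A_{L} \leq \embdim \OL + 1 \leq 2k+1$; the low-dimensional cases admit separate treatment, e.g.\ via the Droms theorem cited in the remarks for $k=1$. The main subtlety is that integral vanishing is strictly stronger than mod~$2$ vanishing: a naive Künneth argument with $\zz/2$-coefficients would give only $\vkt^{2k}(\OL) = 0$, since a $2$-torsion class in $H^{2k}$ can lie in the image of multiplication by $2$ and still be nonzero. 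The integral approach succeeds precisely because $\dim L = k$ kills the Tor term in Künneth, and the hypothesis propagates through the universal coefficient theorem to make $H^{k}(L;\zz)$ itself $2$-torsion-free.
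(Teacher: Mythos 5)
Your proof is correct and follows essentially the same route as the paper: both identify $\vk^{2k}(\OL)=s^*[\mu]$ via Lemma~\ref{l:pullback}, push the question into $H^{2k}(\OL\times L;\zz)$ using the top-degree K\"unneth formula together with universal coefficients and the hypothesis $H_k(L;\zz/2)=0$, and then conclude with completeness of the integral obstruction for $k\neq 2$ and Proposition~\ref{p:emb-act}. The only differences are cosmetic: the paper argues by contradiction (a nonzero $2$-torsion class $s^*[\mu]$ would force a $\zz$ or $\zz/2^r$ summand in $H^{k}(L;\zz)$), whereas you show directly that $H^{k}(L;\zz)$ has odd exponent so $[\mu]$ has odd order, and you are somewhat more explicit about the low-dimensional cases where the hypothesis $\embdim\OL>\dim L+2$ of Proposition~\ref{p:emb-act} needs checking.
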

\begin{proof}
	If $\vk^{2k}(\OL)\neq 0$, then by Lemma~\ref{l:pullback} $s^*([\mu])$ is an element of order 2 in $H^{2k}(\cac(\OL);\zz)$.
	So, the subgroup generated by $[\mu] \in H^{2k}(\OL \times L;\zz)$ contains an index $2$ subgroup.
	Thus, $H^{2k}(\OL \times L;\zz)$ has either a $\zz/2^{r}$ or $\zz$ summand.
	By the Künneth Formula, $H^{2k}(\OL \times L; \zz)=H^{k}(OL; H^{k}(L; \zz))$.
	Hence, $H^{k}(L;\zz)$ has either a $\zz/2^{r}$ or $\zz$ summand.
	So $H_k(L;\zz/2)=\Hom(H^{k}(L;\zz), \zz/2) \neq 0$, a contradiction.
	Since for $k\neq 2$, $\vk^{2k}$ is the complete obstruction, $\OL$ embeds in $S^{2k}$.
	So for $L$ a flag complex, Proposition~\ref{p:emb-act} gives $\actdim A_{L} \leq 2k+1$.
\end{proof}

\subsection{The case where \texorpdfstring{$H_k(L;\zz/2)\neq 0$}{H\_k(L;Z/2)≠0}}
\begin{theorem}\label{t:neq0}
	Let $L$ be a complex and suppose there is a $k$-cycle $M \in Z_k(L;\zz/2)$ and a simplex $\Delta \in M$ so that $(M,\gD)$ satisfies the $*$-condition.
	Then $\vkt^{2k} (\OL) \neq 0$ and $\vkdim \OL \geq 2k$.
\end{theorem}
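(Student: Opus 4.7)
The plan is to chain together the four lemmas of Section~4 to produce an explicit evaluation of (the mod 2 reduction of) the integral van Kampen cocycle $\nu$ on a closed chain of dimension $2k$ in $\cac(\OL)$. The idea is that $\gO\in C_{2k}(\cac(D);\zz/2)$, with $D\subset \OL$ as constructed from $M$ and $\gD$, is a natural candidate for a cycle dual to $\nu_2$, and the $*$-condition is precisely what guarantees it is actually closed.

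First I would form $\gO$ and view it as a chain in $\cac(\OL)$ via the inclusion $\cac(D)\hookrightarrow \cac(\OL)$ induced by the fact that $D$ is a full subcomplex. Lemma~\ref{l:omegacycle} (which uses the $*$-condition in an essential way) says that $\gO$ is a mod 2 cycle in $\cac(D)$; being a chain map, the inclusion sends cycles to cycles, so $\gO$ represents a class in $H_{2k}(\cac(\OL);\zz/2)$.

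Next I would evaluate the mod 2 van Kampen cocycle $\nu_2$ on $\gO$. Using Lemma~\ref{l:pullback} modulo $2$ (so that we may take the coefficients to be $\zz/2$ and suppress the sign $\geps$ in the discussion), $\nu_2 = s^*(\mu_2)$, where $\mu_2$ denotes $\mu$ reduced mod 2. By adjunction,
\[
\langle \nu_2, \gO\rangle \;=\; \langle s^*(\mu_2),\gO\rangle \;=\; \langle \mu_2, s_*(\gO)\rangle.
\]
Lemma~\ref{l:pushforward} identifies $s_*(\gO)=O\gD\times M$ in $C_{2k}(\OL\times L;\zz/2)$, and Lemma~\ref{l:muval} then gives $\langle \mu_2, O\gD\times M\rangle =1$. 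Hence $\langle \nu_2,\gO\rangle =1$.

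Since $\gO$ is a $\zz/2$-cycle and $\nu_2$ evaluates nontrivially on it, $\nu_2$ cannot be a mod 2 coboundary in $\cac(\OL)$; therefore $\vkt^{2k}(\OL)=[\nu_2]\neq 0$ and $\vkdim \OL \geq 2k$. The main obstacle, already absorbed into Section~4, is the verification that $\gO$ is actually closed: a naive boundary calculation fails because one can prolong a $k$-simplex $\sigma$ in $\gO$ by many different vertices of $D$, and only the $*$-condition makes the map from allowed extensions of $\gs$ over $\ga$ to vertices extending $p(\ga)$ in $M$ a bijection, so that closedness of $\gO$ follows from $M$ being a cycle. Once this is in hand, the theorem reduces to the formal pairing identity sketched above.
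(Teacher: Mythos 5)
Your core computation --- pair the meshing cocycle with $\gO$ via $s$, using Lemma~\ref{l:omegacycle} for closedness, Lemma~\ref{l:pullback} for $\nu=s^*\mu$, Lemma~\ref{l:pushforward} for $s_*\gO=O\gD\times M$, and Lemma~\ref{l:muval} for the value $1$ --- is exactly the paper's argument. However, there is a genuine gap: you implicitly assume $k=\dim L$, while the theorem (and its intended applications to cycles below the top dimension) allows $\dim L>k$. In that case several steps break as you have written them: the cocycle $\nu$ of Section~\ref{s:vk} is only known to represent the van Kampen obstruction in the \emph{top} degree $2\dim K$ (the moment-curve description from \cite{mtw11} requires this), so in degree $2k<2\dim \OL$ you cannot conclude $\vkt^{2k}(\OL)=[\nu_2]$; the cochain $\mu$ is defined, and Lemmas~\ref{l:pullback} and~\ref{l:muval} are stated, only for $k=\dim L$ (Lemma~\ref{l:muval} explicitly requires $M$ to be a top-degree cycle); and the full subcomplex $D\subset \OL$ may contain simplices of dimension $>k$, so $2k$-cells of $\cac(D)$ need not be products of two $k$-simplices and the boundary analysis proving that $\gO$ is a cycle (which only treats faces of type $(k,k-1)$) no longer suffices as stated.

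The paper closes this gap by first applying all four lemmas to the $k$-skeleton $L^{(k)}$: there $M$ is a top-degree cycle, $O(L^{(k)})=(\OL)^{(k)}$ has dimension $k$, and your pairing computation shows that the top-degree class $\vkt^{2k}\bigl(O(L^{(k)})\bigr)$ evaluates nontrivially on $\gO$. One then invokes naturality of the van Kampen obstruction: the inclusion $\cac\bigl(O(L^{(k)})\bigr)\hookrightarrow\cac(\OL)$ is compatible with the classifying maps of the double covers, so $\vkt^{2k}(\OL)$ restricts to $\vkt^{2k}\bigl(O(L^{(k)})\bigr)\neq 0$, whence $\vkt^{2k}(\OL)\neq 0$ and $\vkdim\OL\geq 2k$. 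Your argument is correct verbatim in the special case $k=\dim L$ (which is all that part (1) of the Main Theorem needs), but to prove Theorem~\ref{t:neq0} as stated you must add this skeleton-plus-naturality reduction.
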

\begin{proof}
	Applying Lemmas~\ref{l:pushforward}, \ref{l:pullback}, \ref{l:omegacycle}, and~\ref{l:muval} to the $k$-skeleton of $L$ shows that $\vkt^{k}(\OL^{k})$ evaluates nontrivially on the cycle $\gO$.
	Since $\vkt(\OL^{k})$ is the pullback of $\vkt(\OL)$, the result follows.
\end{proof}
\begin{remark}
	If $L$ is a flag complex, then with hypotheses as above, it follows that $\actdim W_{\OL}=\actdim A_L \ge 2k+2$.
	Moreover, since $\obdim \gG$ is a quasi-isometry invariant, any group $\gG$ quasi-isometric to $A_L$ has $\actdim(\gG) \ge 2k+2$.
\end{remark}

Note that if $L$ is a flag complex, then the $*$-condition is automatically satisfied for any top-dimensional cycle, so as a corollary we get statement (1) of the Main Theorem.
\begin{theorem}	
	If $L$ is a $k$-dimensional flag complex and $H_k(L;\zz/2)\neq 0$, then $\vkdim \OL = 2k$ and $\actdim A_L =2k+2= 2\geod A_L$.
\end{theorem}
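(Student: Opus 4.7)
The plan is to deduce the theorem from Theorem~\ref{t:neq0} together with the general bounds on $\actdim A_L$ recorded in the introduction. Essentially all of the work is in verifying the hypotheses of Theorem~\ref{t:neq0}, i.e.\ exhibiting a cycle $M \in Z_k(L;\zz/2)$ and a simplex $\gD \in M$ satisfying the $*$-condition. This is the main (and only nontrivial) step, and it is where flagness of $L$ enters.

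First, since $\dim L = k$ there are no $(k+1)$-chains, so $C_k(L;\zz/2) = Z_k(L;\zz/2)$, and the hypothesis $H_k(L;\zz/2) \neq 0$ furnishes a nonzero top cycle $M$. Pick any top simplex $\gD$ in the support of $M$.

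Next I would check that $(M,\gD)$ automatically satisfies the $*$-condition. Suppose $\gs,\gt \in M$ are top simplices with $\gD^0 \subset \gs \cup \gt$, and suppose toward a contradiction that some vertex $v \in \gs \cap \gt$ does not belong to $\gD$. Since $v$ lies in both of the simplices $\gs$ and $\gt$, it is joined by an edge in $L$ to every vertex of $\gs \cup \gt$, and in particular to every vertex of $\gD$. Together with the edges of $\gD$ itself, this shows that the $k+2$ vertices of $\gD^0 \cup \{v\}$ are pairwise adjacent in the $1$-skeleton of $L$. By flagness they must span a $(k+1)$-simplex of $L$, contradicting $\dim L = k$. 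Hence $\gs \cap \gt \subset \gD$, and the $*$-condition holds.

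With the hypothesis verified, Theorem~\ref{t:neq0} yields $\vkdim \OL \geq 2k$. The reverse inequality is automatic because $\cac(\OL)$ has dimension $2\dim \OL = 2k$, so no higher-degree cohomology class can be nonzero; therefore $\vkdim \OL = 2k$. For the action dimension, the upper bound $\actdim A_L \leq 2\geod A_L = 2(k+1) = 2k+2$ is the general thickening inequality from the introduction, using $\geod A_L = k+1$, while the lower bound $\actdim A_L \geq \vkdim \OL + 2 = 2k+2$ is the Bestvina--Kapovich--Kleiner obstructor-dimension bound applied to the subcomplex $\OL \subset \pinfty A_L$ constructed in Section~\ref{s:prelim} (equivalently, the right-hand inequality in the displayed chain $\embdim \OL + 1 \geq \actdim A_L \geq \vkdim \OL + 2$ preceding Proposition~\ref{p:emb-act}). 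The two bounds coincide, giving $\actdim A_L = 2k+2 = 2\geod A_L$.
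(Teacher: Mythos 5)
Your proposal is correct and follows the paper's own route: the paper likewise derives statement (1) from Theorem~\ref{t:neq0} together with the bounds $\actdim A_L\le 2\geod A_L$ and $\actdim A_L\ge \vkdim\OL+2$, observing (without spelling it out) that flagness of a $k$-dimensional $L$ makes the $*$-condition automatic for any top-dimensional cycle. Your explicit verification of that observation --- a common vertex $v\notin\gD$ of $\gs$ and $\gt$ would be adjacent to all of $\gD^0$, forcing a $(k+1)$-simplex by flagness --- is exactly the intended argument, and it applies verbatim to all simplices of $M$, not just top ones.
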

\begin{remark}\label{r:annoying}
	It is annoying that we need this additional hypothesis of the $*$-condition.
	We conjecture that when $H_l(L;\zz/2)\neq 0$ we can always choose $M$ and $\gD$ to satisfy the $*$-condition.
	
	The following picture illustrates how our argument breaks down.
	Let $M$ be a $2$-cycle whose support contains the four shaded triangles.
	One checks that when we double over $\Delta$, the coefficient of $\partial \gO$ at $\{\sigma,\alpha\}$ is not $0$.
	In fact, if one doubles over $\Delta$ as in the picture, and $L$ contains no smaller $l$-cycles, it turns out that $D$ is not a $2l$-obstructor.
	
	\medskip
	\begin{center}
		\begin{tikzpicture}[thick, scale = 1.0]
			
			\coordinate (a) at (0, 0);
			\coordinate (b) at (1, -2);
			\coordinate (c) at (4,-1);
			\coordinate (d) at (2, 3);
			\coordinate (e) at (5, 2 );
			\coordinate (f) at (4.5, 3);
			
			\draw (b)--(c);
			\path[name path=ac] (a) -- (c);
			\path[name path=bd] (b) -- (d);
			\path[name intersections={of= ac and bd, by=t }];
			\draw[dashed] (a)--(t);
			\draw (t)--(c);
			\fill[white] (t) circle[radius=2pt];
			
			\fill[fill = black, fill opacity = .1] (a)--(b)--(c)--cycle;
			\draw[fill = black, fill opacity = .1] (a)--(b)--(d)--cycle;
			
			\draw (d)--(f);
			\path[name path=cf] (c) -- (f);
			\path[name path=de] (d) -- (e);
			\path[name intersections={of= cf and de, by=x }];
			\draw[dashed] (c)--(x);
			\draw (x)--(f);
			\fill[white] (x) circle[radius=2pt];
			
			\fill[fill = black, fill opacity = .1] (c)--(d)--(f)--cycle;
			
			\draw[fill = black, fill opacity = .1] (c)--(d)--(e)--cycle;

			\fill (a) circle[radius=2pt];
			\fill (b) circle[radius=2pt];
			\fill (c) circle[radius=2pt];
			\fill (d) circle[radius=2pt];
			\fill (e) circle[radius=2pt];
			\fill (f) circle[radius=2pt];
			
			\node at (2.1, -1) {$\Delta$};
			\node at (1, .4) {$\sigma$};
			\node at (3.75, 1.25) {$\tau$};
			\node at (4.7, .3) {$\alpha$};
		\end{tikzpicture}
	\end{center}
\end{remark}

\section{Bounds on the van Kampen dimension \texorpdfstring{of $\boldsymbol{\OL}$}{of OL}} 
The following basic lemma is proved in \cite{bkk}.
\begin{joinlemma}[cf.~\cite{bkk}*{p.~224}] $\vkdim(K_1*K_2) = \vkdim(K_1) + \vkdim(K_2) + 2$.
\end{joinlemma}

Given a simplex $\gs\in L$, denote the link (resp.
closed star) of $\gs$ in $L$ by $\lk(\gs)$ (resp.
$\st(\gs)$).
We then have $\st(\gs)=\lk(\gs)* \gs$.
Since octahedralization commutes with taking joins and since $\vkdim S^{\dim\gs}=\dim\gs -1$, the Join Lemma gives
\begin{align*}
	\vkdim(O\st(\gs))&=\vkdim (O\lk(\gs)) +\vkdim (O\gs) +2 \\
	&=\vkdim (O\lk(\gs)) +\dim \gs +1.
\end{align*}
Sometimes we can use this observation to determine $\actdim A_L$.
For example, for a flag complex $L$, if $H_k(L;\zz/2)$ vanishes in the top degree and $H_{k-1}(\lk(v);\zz/2)\neq 0$ for some vertex $v\in L$, then, by Theorem~\ref{t:neq0}, $\vkdim O\lk(v)=2k-2$ and so, $\vkdim \OL= \vkdim O\st(v)=2k-1$.
Hence, $\embdim \OL=2k$ and $\actdim A_{L}=2k+1$.

\section{The Action Dimension Conjecture}

Suppose $\gG$ acts properly and cocompactly on a finite dimensional, acyclic CW complex $Y$.
The $\ell^2$-Betti number $\lb_i(\gG)$ is then defined as the von Neumann dimension of the $i^{th}$ reduced $\ell^2$-homology group of $Y$.
The \emph{$\ell^2$-dimension of $\gG$}, denoted $\ltdim \gG$, is defined by
\[
	\ltdim \gG:=\max\{i\mid \lb_i(\gG)\neq 0\}.
\]

The most well known conjecture concerning $\ell^2$-homology is the following.
\begin{Sconj}
	If $\gG$ is a $n$-dimensional Poincaré duality group, then $\lb_i(\gG)$ vanishes for $i \ne n/2$.
\end{Sconj}

In \cite{do01}*{Conjecture 0.8} the second and third authors conjectured the following.
\begin{Aconj}	
	$\actdim \gG \geq 2\ltdim \gG.$
\end{Aconj}
\begin{remark}
	In~\cite{os14} the third and fourth authors show that the Singer Conjecture implies the Action Dimension conjecture in many cases.
	In particular, the conjectures are equivalent if one restricts to actions on manifolds that are equivariantly $PL$-triangulated.
\end{remark}
\begin{examples}
	Here are some examples when the conjecture holds.
	\begin{enumerate1}
		\item		
		If all $\ell^2$-Betti numbers of $\gG$ are $0$ (e.g., if $\gG$ contains an infinite amenable normal subgroup), then the Action Dimension Conjecture for $\gG$ holds trivially.
		\item\label{i:singer}
		If $\gG$ is the fundamental group of a closed aspherical manifold $M^n$ and the Singer Conjecture holds for $M^n$, then the Action Dimension Conjecture holds for $\gG$.
		\item\label{i:lattice}
		If $\gG$ is a lattice in a semisimple Lie group without compact factors, then the conjecture holds for $\gG$.
		\item\label{i:mcg}
		If $\gG$ is the mapping class group of a surface with marked points or punctures, then the conjecture holds for $\gG$.
		\item\label{i:2gd}
		If $\actdim(\gG) = 2\geod(\gG)$, then the conjecture holds for $\gG$.
		(For example, it is proved in \cite{bkk} that this is true for $\gG=Out(F_n)$.)
	\end{enumerate1}
	\begin{proof}[Sketch of proofs] We indicate proofs for examples (2) through (5).
		
		\ref{i:singer} The Singer Conjecture for $M^n$ asserts that $\lb_i(\gG)=0$ for $i\neq n/2$.
		Since $\actdim(\gG) = n$, this implies the Action Dimension conjecture.
		
		\ref{i:lattice} By using the technique of \cite{bkk}, Bestvina and Feighn proved in \cite{BF} that the action dimension of the lattice $\gG$ is the dimension $n$ of the corresponding symmetric space $G/K$.
		Using square summable differentiable forms one sees that the only reduced $\ell^2$-cohomology group of $G/K$ is in degree $n/2$.
		Finally, it follows from a result of Cheeger and Gromov~\cite{cg85} that a $\gG$-stable, cocompact, contractible submanifold of $G/K$ has the same $\ell^2$-cohomology groups as $G/K$; hence, the only possible nonzero $\ell^2$-Betti number of $\gG$ lies in degree $n/2$.
		
		\ref{i:mcg} The argument when $\gG$ is a mapping class group is similar.
		Using \cite{bkk}, Despotović~\cite{d06a} showed that $\actdim \gG =\dim \mathcal {T}$, where $\mathcal{T}$ is the appropriate Teichmüller space.
		McMullen \cite{m00} showed that $\mathcal{T}$ admits a K\"ahler hyperbolic metric, and Gromov~\cite{g91} showed that this implies that $\mathcal{T}$ has reduced $\ell^2$-cohomology only in the middle dimension.
		Finally, the same theorem of Cheeger--Gromov shows that $\gG$ has nonzero $\ell^2$-Betti number only in the middle dimension.
		
		\ref{i:2gd} This is immediate from the inequality $\ltdim(G) \le \geod(G)$.
	\end{proof}
\end{examples}

The $\ell^2$-Betti numbers of any RAAG $A_L$ were calculated in \cite{dl03}.
(Actually, the result of \cite{dl03} stated below is valid for the universal cover of the Salvetti complex of any Artin group $A_L$, where $L$ denotes the nerve of the associated Coxeter group.)
\begin{theorem}[cf.~Davis--Leary~\cite{dl03}]
	
	Suppose $\redb_i(L)$ denotes the ordinary reduced Betti number, $\dim_{\QQ} \redh_i(L;\QQ)$.
	Then
	\[
		\btwo_{i+1} (A_L)=\redb_i(L).
	\]
\end{theorem}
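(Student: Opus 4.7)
The plan is to compute $\btwo_*(A_L)$ directly from the cellular $\ell^2$-chain complex of the universal cover $\XLT$ of the Salvetti complex. Recall that the cells of $X_L$ are in bijection with simplices of $L_+ := L \cup \{\emptyset\}$, with $\dim c_\gs = |\gs^{0}|$; thus $C_n^{(2)}(\XLT)$ is a free $N(A_L)$-module of rank equal to the number of $(n-1)$-simplices of $L$, plus one extra summand in degree $0$ from $c_\emptyset$. Already the $\ell^2$-Euler characteristic gives $\chi^{(2)}(A_L) = \sum_{\gs \in L_+}(-1)^{|\gs^{0}|} = -\wt\chi(L)$, consistent with the asserted formula.

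To recover the individual $\ell^2$-Betti numbers, my plan is to induct on the number of vertices of $L$. For any vertex $v \in L$, the natural map
\[
A_{L \setminus v} \ast_{A_{\lk v}} A_{\st v} \;\longrightarrow\; A_L
\]
is an isomorphism, as can be seen by comparing defining presentations; here $A_{\st v} \cong \zz \times A_{\lk v}$ because $\st v = v \ast \lk v$. I would then apply the Mayer--Vietoris sequence in $\ell^2$-homology for this amalgamated product. By the $\ell^2$-Künneth formula together with $\btwo_*(\zz) \equiv 0$, we have $\btwo_*(A_{\st v}) \equiv 0$, so the sequence simplifies to one involving only $\btwo_*(A_L)$, $\btwo_*(A_{L \setminus v})$, and $\btwo_*(A_{\lk v})$. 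Inductively these are reduced Betti numbers of $L \setminus v$ and $\lk v$, and the resulting sequence matches the Mayer--Vietoris sequence computing $\redb_*(L)$ from the decomposition $L = (L \setminus v) \cup \st v$ (where $\st v$ is contractible and the intersection is $\lk v$), yielding the claimed formula.

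The main obstacle is the bookkeeping around reduced versus unreduced homology, especially the base case $L = \emptyset$, where $A_L$ is trivial, $\btwo_0 = 1$, and $\redb_{-1}(\emptyset) = 1$, and verifying that the connecting maps in the $\ell^2$-Mayer--Vietoris sequence correspond, under the inductive hypothesis, to the simplicial connecting maps for $L$. A secondary subtlety is the precise form of the $\ell^2$-Mayer--Vietoris sequence for amalgamated products; this is standard but requires the Hilbert-module framework. An alternative, less inductive route would be to run a Mayer--Vietoris-style spectral sequence for the cover of $X_L$ by the subtori $T(\gs)$, using that each positive-dimensional subtorus lifts to $\XLT$ as a disjoint union of copies of $\rr^{|\gs^{0}|}$ freely acted on by the infinite amenable group $A_\gs \cong \zz^{|\gs^{0}|}$, and hence contributes nothing in $\ell^2$; only the combinatorics of how the tori fit together, encoded by $L$, survives on the $E_\infty$-page.
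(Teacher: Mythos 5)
The paper does not actually prove this statement; it is quoted from Davis--Leary \cite{dl03}, so there is no internal argument to compare yours against. That said, your overall strategy---the decomposition $A_L \cong A_{L\setminus v} \ast_{A_{\lk(v)}} A_{\st(v)}$, the vanishing $\btwo_*(A_{\st(v)})=0$ via the K\"unneth formula and $\btwo_*(\zz)=0$, and a Mayer--Vietoris comparison with the simplicial decomposition $L=(L\setminus v)\cup\st(v)$---is a legitimate and standard-looking route, your Euler-characteristic consistency check is correct, and your alternative sketch (the cover of $X_L$ by the subtori $T(\gs)$, whose preimages in $\XLT$ are induced from the amenable groups $A_\gs$ and hence $\ell^2$-acyclic) is close in spirit to how such computations are actually carried out.

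However, what you describe as bookkeeping is the heart of the matter, and as written the induction does not close. The $\ell^2$ Mayer--Vietoris sequence (weakly exact for Hilbert modules, or genuinely exact in L\"uck's $N(A_L)$-module framework with its dimension function) together with your inductive hypothesis only gives you the von Neumann dimensions of the terms coming from $A_{\lk(v)}$ and $A_{L\setminus v}$; the dimension of the remaining term $\btwo_n(A_L)$ is not determined by the dimensions of its neighbors in a long exact sequence---you also need the von Neumann ranks of the maps induced by $A_{\lk(v)}\hookrightarrow A_{L\setminus v}$, and the claim that these equal the ranks of $\redh_{n-1}(\lk(v);\QQ)\to \redh_{n-1}(L\setminus v;\QQ)$ does not follow from a purely numerical inductive hypothesis. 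To repair this you must strengthen the induction to a natural, module-level identification (for instance, a dimension-isomorphism from $\redh_{n-1}(K;\rr)\otimes \ell^2(A_L)$ to the $n$-th $\ell^2$-homology of $A_K$ with $N(A_L)$-coefficients, natural in full subcomplexes $K\subset L$); constructing such an identification and verifying its compatibility with inclusions is essentially the content of the Davis--Leary theorem itself. The same issue reappears in your spectral-sequence variant: the $E_1$-page is indeed concentrated where the intersections of subtori are points, but identifying the resulting combinatorial complex and its differentials with one computing $\redh_{*-1}(L)$, and handling convergence in the reduced $\ell^2$ setting (better to work with L\"uck's algebraic framework), is the substantive step that is missing rather than merely deferred.
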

\begin{corollary}\label{c:ltdim}
	The $\ell^2$-dimension of a RAAG $A_L$ is given by
	\[
		\ltdim A_L=1+\max\{i\mid \redb_i(L) \neq 0\}.
	\]
\end{corollary}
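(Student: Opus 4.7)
The plan is essentially a one-line argument by direct substitution into the Davis--Leary formula. Starting from the definition
\[
\ltdim A_L = \max\{j \mid \btwo_j(A_L) \neq 0\},
\]
I would apply the theorem just stated, $\btwo_{i+1}(A_L) = \redb_i(L)$, to reindex: setting $j = i+1$, the condition $\btwo_j(A_L) \neq 0$ is equivalent to $\redb_{j-1}(L) \neq 0$. Therefore
\[
\max\{j \mid \btwo_j(A_L) \neq 0\} = \max\{i+1 \mid \redb_i(L) \neq 0\} = 1 + \max\{i \mid \redb_i(L) \neq 0\},
\]
which is the desired formula.

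There is essentially no obstacle here; the corollary is purely bookkeeping on top of the Davis--Leary computation. The only mild point to check is that the maxima are taken over nonempty sets, i.e.\ that $\redb_i(L) \neq 0$ for some $i \geq 0$. This is not automatic (if $L$ is a single simplex then all reduced Betti numbers vanish and $A_L = \mathbb{Z}^n$ has all $\btwo_j = 0$), but under the natural convention $\max \emptyset = -\infty$ — or, equivalently, restricting to cases where $A_L$ has some nonvanishing $\ell^2$-Betti number — the two sides of the equation remain equal. So the proof reduces to citing the previous theorem and performing the index shift.
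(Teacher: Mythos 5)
Your proof is correct and is exactly the (implicit) argument of the paper, which states the corollary without proof as an immediate reindexing of the Davis--Leary formula $\btwo_{i+1}(A_L)=\redb_i(L)$. Your side remark about degenerate cases is fine as well; with reduced homology conventions (in particular $\redh_{-1}$ of the empty complex) the formula remains consistent, so nothing further is needed.
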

\begin{theorem}\label{t:actdimconj}
	Suppose the $k$-dimensional flag complex $L$ satisfies $H_k(L;\zz/2)\neq 0$.
	Then the Action Dimension Conjecture holds for $A_L$.
\end{theorem}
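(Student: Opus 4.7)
The statement follows quickly by combining the Main Theorem (part (1)), already established for flag complexes satisfying the stated hypothesis, with the $\ell^2$-Betti number computation of Davis--Leary (stated just above the theorem). The plan is to convert both $\actdim A_L$ and $\ltdim A_L$ to numerical invariants of $L$, and then note that the dimension of $L$ forces the required inequality.

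First I would record the lower bound on the action dimension. Since $L$ is a $k$-dimensional flag complex with $H_k(L;\zz/2)\neq 0$, the $*$-condition is automatic for any top-dimensional $\zz/2$-cycle, so Theorem~\ref{t:neq0} applies and yields $\vkdim \OL \geq 2k$. Combined with Proposition~\ref{p:emb-act} (more precisely, with part (1) of the Main Theorem that has already been proved), this gives
\[
   \actdim A_L = 2k+2.
\]

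Next I would bound $\ltdim A_L$ from above. Corollary~\ref{c:ltdim} expresses
\[
   \ltdim A_L = 1 + \max\{\,i \mid \redb_i(L)\neq 0\,\}.
\]
Since $\dim L = k$, we have $\redb_i(L)=0$ for every $i>k$, so the maximum on the right is at most $k$. Therefore $\ltdim A_L \leq k+1$.

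Combining these two computations,
\[
   \actdim A_L \;=\; 2k+2 \;\geq\; 2(k+1) \;\geq\; 2\,\ltdim A_L,
\]
which is exactly the Action Dimension Conjecture for $A_L$. There is no real obstacle here: the entire substance of the argument has already been carried through in the Main Theorem (part (1)) and in the Davis--Leary formula for $\lb_*(A_L)$; the present theorem is just the observation that, for $k$-dimensional $L$, the Davis--Leary formula gives the sharp upper bound $\ltdim A_L \leq \dim L + 1$, which is precisely half of the action dimension we have just computed.
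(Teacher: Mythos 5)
Your proof is correct and follows essentially the same route as the paper: both deduce $\actdim A_L = 2k+2$ from Theorem~\ref{t:neq0} and then bound $\ltdim A_L \leq k+1$. The only cosmetic difference is that you justify the bound on $\ltdim A_L$ via the Davis--Leary formula (Corollary~\ref{c:ltdim}) and $\dim L = k$, while the paper invokes $\ltdim A_L \leq \geod A_L = k+1$; these are interchangeable one-line observations.
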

\begin{proof}
	
	Since $\geod A_L=k+1$, $\ltdim A_L \leq k+1$.
	By Theorem~\ref{t:neq0},
	\[
		\actdim A_L =2k+2 \geq 2 \ltdim A_L .
	\]
\end{proof}
\begin{remark}
By the Universal Coefficient Theorem, if $H_i(L;\QQ)\neq 0$, then $H_i(L;\zz/2)\neq 0$.  So, if we could remove the annoying hypothesis in Theorem~\ref{t:neq0} (concerning the $*$-condition), then Corollary~\ref{c:ltdim} would imply that the Action Dimension Conjecture holds for all RAAGs (cf.~Remark~\ref{r:annoying}).
\end{remark}
\begin{bibdiv}
\begin{biblist}
 \bib{akin}{article} {

	author = {Akin, Ethan},
	title = {Manifold phenomena in the theory of polyhedra},
	journal = {Trans. Amer. Math. Soc.},

	volume = {143}, YEAR = {1969},
	pages = {413--473},
	issn = {0002-9947},
 }

 \bib{BB}{article}{
	author = {Bestvina, Mladen},
	author = {Brady, Noel},
	title = {Morse theory and finiteness properties of groups},
	journal = {Invent. Math.},

	volume = {129}, YEAR = {1997},
	number = {3},
	pages = {445--470},
	issn = {0020-9910},

	url = {http://dx.doi.org.proxy.lib.ohio-state.edu/10.1007/s002220050168}, }

 \bib{BF}{article} {

	author = {Bestvina, Mladen},
	author = {Feighn, Mark},
	title = {Proper actions of lattices on contractible manifolds},
	journal = {Invent. Math.},

	volume = {150}, YEAR = {2002},
	number = {2},
	pages = {237--256},
	issn = {0020-9910},

	url = {http://dx.doi.org.proxy.lib.ohio-state.edu/10.1007/s00222-002-0239-6}, }

 \bib{bkk}{article} {
	author = {Bestvina, Mladen},
	author = {Kapovich, Michael},
	author = {Kleiner, Bruce},
	title = {Van {K}ampen's embedding obstruction for discrete groups},
	journal = {Invent. Math.},

	volume = {150}, YEAR = {2002},
	number = {2},
	pages = {219--235},
	issn = {0020-9910},

	url = {http://dx.doi.org.proxy.lib.ohio-state.edu/10.1007/s00222-002-0246-7}, } \bib{cd95}{incollection} {

	author = {Charney, Ruth},
	author = {Davis, Michael W.},
	title = {Finite {$K(\pi, 1)$}s for {A}rtin groups},
	booktitle = {Prospects in topology ({P}rinceton, {NJ}, 1994)},
	series = {Ann. of Math. Stud.},
	volume = {138},
	pages = {110--124}, PUBLISHER = {Princeton Univ. Press, Princeton, NJ}, YEAR = {1995},
 } \bib{cg85}{article}{
	author = {Cheeger, Jeff},
	author = {Gromov, Mikhael},
	title = {Bounds on the von {N}eumann dimension of ${L}\sp 2$-cohomology and the {G}auss-{B}onnet theorem for open manifolds},
	date = {1985},
	issn = {0022-040X},
	journal = {J. Differential Geom.},
	volume = {21},
	number = {1},
	pages = {1\ndash 34},
 }

 \bib{davisbook}{book} {

	author = {Davis, Michael W.},
	title = {The geometry and topology of {C}oxeter groups},
	series = {London Mathematical Society Monographs Series},
	volume = {32}, PUBLISHER = {Princeton University Press, Princeton, NJ}, YEAR = {2008},
	pages = {xvi+584}, ISBN = {978-0-691-13138-2; 0-691-13138-4},
 }

 \bib{dl03}{article} {

	author = {Davis, M. W.},
	author = {Leary, I. J.},
	title = {The {$l^2$}-cohomology of {A}rtin groups},
	journal = {J. London Math. Soc. (2)},

	volume = {68}, YEAR = {2003},
	number = {2},
	pages = {493--510},
	issn = {0024-6107},

	url = {http://dx.doi.org.proxy.lib.ohio-state.edu/10.1112/S0024610703004381}, }

 \bib{dj00}{article}{
	author = {Davis, Michael W.},
	author = {Januszkiewicz, Tadeusz},
	title = {Right-angled {A}rtin groups are commensurable with right-angled {C}oxeter groups},
	journal = {J. Pure Appl. Algebra},

	volume = {153}, YEAR = {2000},
	number = {3},
	pages = {229--235},
	issn = {0022-4049},

	url = {http://dx.doi.org.proxy.lib.ohio-state.edu/10.1016/S0022-4049(99)00175-9}, }

 \bib{do01}{article}{

	author = {Davis, Michael W.},
	author = {Okun, Boris},
	title = {Vanishing theorems and conjectures for the {$\ell^2$}-homology of right-angled {C}oxeter groups},
	journal = {Geom. Topol.},

	volume = {5}, YEAR = {2001},
	pages = {7--74},
	issn = {1465-3060},

	url = {http://dx.doi.org.proxy.lib.ohio-state.edu/10.2140/gt.2001.5.7}, }

 \bib{do12}{article} {

	author = {Davis, Michael W.},
	author = {Okun, Boris},
	title = {Cohomology computations for {A}rtin groups, {B}estvina-{B}rady groups, and graph products},
	journal = {Groups Geom. Dyn.},

	volume = {6}, YEAR = {2012},
	number = {3},
	pages = {485--531},
	issn = {1661-7207},
 } \bib{d06a}{thesis}{
	title = { Action Dimension of Mapping Class Groups},
	author = { Despotović, Zrinka}, year = {2006}, school ={ Department of Mathematics, University of Utah}, type = {phd}, }

 \bib{dr93}{article} {

	author = {Drani{\v{s}}nikov, A. N.},
	author = {Repov{\v{s}}, D.},
	title = {Embeddings up to homotopy type in {E}uclidean space},
	journal = {Bull. Austral. Math. Soc.},

	volume = {47}, YEAR = {1993},
	number = {1},
	pages = {145--148},
	issn = {0004-9727},

	url = {http://dx.doi.org.proxy.lib.ohio-state.edu/10.1017/S0004972700012338}, }

 \bib{d87a}{article}{
	author = {Droms, Carl},
	title = {Graph groups, coherence, and three-manifolds},
	date = {1987},
	issn = {0021-8693},
	journal = {J. Algebra},
	volume = {106},
	number = {2},
	pages = {484\ndash 489},
}

 \bib{fkt94}{article}{
	author = {Freedman, Michael~H.},
	author = {Krushkal, Vyacheslav~S.},
	author = {Teichner, Peter},
	title = {Van {K}ampen's embedding obstruction is incomplete for $2$-complexes in $R^4$},
	date = {1994},
	journal = {Math. Res. Lett},
	volume = {1},
	number = {2},
	pages = {167--176}, }

 \bib{gordon}{article} {

	author = {Gordon, C. McA.},
	title = {Artin groups, 3-manifolds and coherence},
	journal = {Bol. Soc. Mat. Mexicana (3)},

	volume = {10}, YEAR = {2004},
	number = {Special Issue},
	pages = {193--198},
	issn = {1405-213X},
 }

 \bib{g91}{article}{
	author = {Gromov, M.},
	title = {{K}\"ahler hyperbolicity and ${L}_2$-{H}odge theory},
	date = {1991},
	issn = {0022-040X},
	journal = {J. Differential Geom.},
	volume = {33},
	number = {1},
	pages = {263\ndash 292},
 }

 \bib{hm99}{article}{
	author = {Hermiller, Susan~M.},
	author = {Meier, John},
	title = {Artin groups, rewriting systems and three-manifolds},
	date = {1999},
	issn = {0022-4049},
	journal = {J. Pure Appl. Algebra},
	volume = {136},
	number = {2},
	pages = {141\ndash 156},
}

 \bib{hw99}{article} {

	author = {Hsu, Tim},
	author = {Wise, Daniel T.},
	title = {On linear and residual properties of graph products},
	journal = {Michigan Math. J.},

	volume = {46}, YEAR = {1999},
	number = {2},
	pages = {251--259},
	issn = {0026-2285},
 }

 \bib{vk}{article}{
	author = {van Kampen, E.~R.},
	title = {Komplexe in euklidischen Räumen},
	journal = {Abh. Math. Sem. Univ. Hamburg},
	volume = {9}, YEAR = {1933},
	number = {1},
	pages = {72--78},

 }

 \bib{mtw11} {article}{
	author = {Matou\v{s}ek, Ji\v{r}i},
	author = {Tancer, Martin},
	author = {Wagner, Uli},
	title = {Hardness of embedding simplicial complexes in $\mathbb{R}^d$},
	journal = {J. Eur. Math. Soc. (JEMS)},

	volume = {13}, YEAR = {2011},
	number = {2},
	pages = {259--295},
	issn = {1435-9855},
 }

 \bib{m00}{article}{
	author = {McMullen, Curtis~T.},
	title = {The moduli space of Riemann surfaces is Kähler hyperbolic},
	date = {2000},
	issn = {0003-486X},
	journal = {Ann. of Math. (2)},
	volume = {151},
	number = {1},
	pages = {327\ndash 357},
 }

 \bib{mel09}{article}{

	author = {Melikhov, S. A.},
	title = {The van {K}ampen obstruction and its relatives},
	journal = {Tr. Mat. Inst. Steklova},

	volume = {266}, YEAR = {2009},
	number = {Geometriya, Topologiya i Matematicheskaya Fizika. II},
	pages = {149--183},
	issn = {0371-9685},

	url = {http://dx.doi.org.proxy.lib.ohio-state.edu/10.1134/S0081543809030092}, }

 \bib{os14}{unpublished}{
	author = {Okun, Boris},
	author = {Schreve, Kevin},
	title = {The ${L}^2$-(co)homology of groups with hierarchies},
	date = {2014}, note={preprint}, }

 \bib{stallings}{unpublished}{
	author = {Stallings, J.R.},
	title = {Embedding homotopy types into manifolds},
	date = {1965}, note={unpublished},
	url = {http://math.berkeley.edu/~stall/embkloz.pdf} }

 \bib{zeeman}{article} {

	author = {Zeeman, E. C.},
	title = {Unknotting combinatorial balls},
	journal = {Ann. of Math. (2)},

	volume = {78}, YEAR = {1963},
	pages = {501--526},
	issn = {0003-486X},
 }

\end{biblist}
\end{bibdiv}

Grigori Avramidi, University of Utah, Department of Mathematics, 155 S. 1400 E., Salt Lake City, UT 84112-0090, \url{gavramid@math.utah.edu}

Michael W. Davis, Department of Mathematics, The Ohio State University, 231 W. 18th Ave., Columbus Ohio 43210, \url{davis.12@math.osu.edu}

Boris Okun, University of Wisconsin-Milwaukee, Department of Mathematical Sciences, PO Box 413, Milwaukee, WI 53201-0413, \url{okun@uwm.edu}

Kevin Schreve, University of Wisconsin-Milwaukee, Department of Mathematical Sciences, PO Box 413, Milwaukee, WI 53201-0413, \url{kschreve@uwm.edu}
\end{document}